\titlespacing{\section}{0cm}{3.5pc}{1.5pc}
\def\@citex[#1]#2{\if@filesw\immediate\write\@auxout{\string\citation{#2}}\fi
  \def\@citea{}\@cite{\@for\@citeb:=#2\do
    {\@citea\def\@citea{\@citesep}\@ifundefined
       {b@\@citeb}{{\bf ?}\@warning
       {Citation `\@citeb' on page \thepage \space undefined}}%
{\csname b@\@citeb\endcsname}}}{#1}}
\def\@citesep{; }
\newtheoremstyle{Kang}{}{}{\itshape}{}{\bf}{}{.5em}{}
\theoremstyle{Kang}
\newtheorem{theorem}{Theorem}[section]
\newtheorem{lemma}[theorem]{Lemma}
\newtheoremstyle{Kremark}{}{}{}{}{\bf}{}{.5em}{}
\theoremstyle{Kremark}
\newtheorem*{remark}{Remark.}
\newtheorem{defn}[theorem]{Definition}
\newtheorem{example}[theorem]{Example}
\newtheorem{other}{}
\newenvironment{Case}[1]{\medskip {\it Case #1.}}{}
\def\fn#1{\operatorname{#1}} 
\def\bm#1{\mathbbm{#1}}
\def\side#1#2{\mathop{^{#1}\mkern-1.5mu#2}}
\title{Invariants of wreath products and subgroups of $S_6$}
\author{Ming-chang Kang$^{(1)}$, Baoshan Wang$^{(2)}$ and Jian Zhou$^{(3)}$ \\[3mm]
\begin{minipage}{16cm} \begin{description} \itemsep=-1pt
\item[] $^{(1)}$Department of Mathematics, National Taiwan University, Taipei,\\ E-mail: kang@math.ntu.edu.tw
\item[] $^{(2)}$School of Mathematics and System Sciences, Beihang University,\\ Beijing, E-mail: bwang@buaa.edu.cn
\item[] $^{(3)}$School of Mathematical Sciences, Peking University, Beijing,\\ E-mail:
zhjn@math.pku.edu.cn
\end{description} \end{minipage}}
\date{}
\begin{document}

\maketitle

\footnote{\textit{\!\!\! $2010$ Mathematics Subject Classification}. 13A50, 14E08.}
\footnote{\textit{\!\!\! Keywords and phrases}.
Noether's problem, rationality problem, wreath products.}
\footnote{\!\!\! Parts of the work were finished when the second-named author and the third-named author were visiting National Taiwan University.}

\begin{abstract}
{\noindent\bf Abstract.} Let $G$ be a subgroup of $S_6$, the
symmetric group of degree 6. For any field $k$, $G$ acts naturally
on the rational function field $k(x_1,\ldots,x_6)$ via
$k$-automorphisms defined by $\sigma\cdot x_i=x_{\sigma(i)}$ for
any $\sigma\in G$, any $1\le i\le 6$. Theorem. The fixed field
$k(x_1,\ldots,x_6)^G$ is rational (=purely transcendental) over
$k$, except possibly when $G$ is isomorphic to $PSL_2(\bm{F}_5)$,
$PGL_2(\bm{F}_5)$ or $A_6$. When $G$ is isomorphic to
$PSL_2(\bm{F}_5)$ or $PGL_2(\bm{F}_5)$, then
$\bm{C}(x_1,\ldots,x_6)^G$ is $\bm{C}$-rational and
$k(x_1,\ldots,x_6)^G$ is stably $k$-rational for any field $k$.
The invariant theory of wreath products will be investigated also.
\end{abstract}

\newpage
\section{Introduction}

Let $k$ be a field. A finitely generated field extension $L$ of
$k$ is called $k$-rational if $L$ is purely transcendental over
$k$; it is called stably $k$-rational if $L(y_1,y_2,\ldots,y_m)$
is $k$-rational where $y_1,\ldots,y_m$ are elements which are
algebraically independent over $L$.

Let $G$ be a subgroup of $S_n$ where $S_n$ is the symmetric group
of degree $n$. For any field $k$, $G$ acts naturally on the
rational function field $k(x_1,\ldots,x_n)$ via $k$-automorphisms
defined by $\sigma\cdot x_i=x_{\sigma(i)}$ for any $\sigma\in G$,
any $1\le i\le n$. Noether's problem asks whether the fixed field
$k(x_1,\ldots,x_n)^G:=\{f\in k(x_1,\ldots,x_n):\sigma(f)=f$ for
all $\sigma\in G\}$ is $k$-rational (resp.\ stably $k$-rational)
\cite{No}. If $G$ is embedded in $S_N$ through the left regular
representation (where $N=|G|$), $k(x_1,\ldots,x_N)^G$ is nothing
but $k(V_{\fn{reg}})^G$ where $\rho:G\to GL(V_{\fn{reg}})$ is the
regular representation of $G$, i.e.\ $V_{\fn{reg}}=\bigoplus_{g\in
G} k\cdot e_g$ is a $k$-vector space and $h\cdot e_g=e_{hg}$ for
any $h,g\in G$. We will write $k(G)=k(V_{\fn{reg}})^G$ in the
sequel. The rationality problem of $k(G)$ is also called Noether's
problem, e.g.\ in the paper of Lenstra \cite{Le}.

If $G$ is a transitive subgroup of $S_n$, then the $G$-field
$k(x_1,\ldots,x_n)$ may be linearly embedded in the $G$-field
$k(V_{\fn{reg}})$ by Lemma \ref{l1.5}; thus $k(G)$ is rational
over $k(x_1,\ldots,x_n)^G$ by \cite[Theorem 2.1]{KW}. In
particular, if $k(x_1,\ldots,x_n)^G$ is $k$-rational, then so is
$k(G)$. We don't know whether the converse is true or not.

Noether's problem is related to the inverse Galois problem,
to the existence of generic $G$-Galois extensions,
and to the existence of versal $G$-torsors over $k$-rational field extensions.
For a survey of this problem, see \cite{GMS,Sa,Sw}.

This paper is a continuation of our paper \cite{KW}. We recall the
main results of \cite{KW} first. Let $k$ be any field, $G$ a
subgroup of $S_n$ acting naturally on $k(x_1,\ldots,x_n)$ by
$\sigma\cdot x_i=x_{\sigma(i)}$ for any $\sigma\in G$, any $1\le
i\le n$.

\begin{theorem} \label{t1.1}
\begin{enumerate}
\item[{\rm (1)}] {\rm (\cite[Theorem 1.3]{KW})}
For any field $k$,
any subgroup $G$ of $S_n$, if $n\le 5$, then $k(x_1,\ldots,x_n)^G$ is $k$-rational.

\item[{\rm (2)}] {\rm (\cite[Theorem 1.4]{KW})}
Let $k$ be any field, $G$ be a transitive subgroup of $S_7$.
If $G$ is not isomorphic to the group $PSL_2(\bm{F}_7)$ or the group $A_7$,
then $k(x_1,\ldots,x_7)^G$ is $k$-rational.

Moreover, when $G$ is isomorphic to $PSL_2(\bm{F}_7)$ and $k$ is a field satisfying that $k\supset \bm{Q}(\sqrt{-7})$,
then $k(x_1,\ldots,x_7)^G$ is also $k$-rational.

\item[{\rm (3)}] {\rm (\cite[Theorem 1.5]{KW})}
Let $k$ be any field, $G$ be a transitive solvable subgroup of $S_{11}$.
Then $k(x_1,\ldots,x_{11})^G$ is $k$-rational.
\end{enumerate}
\end{theorem}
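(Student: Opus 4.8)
The plan is to reduce all three parts to the classification of transitive permutation groups of the relevant degree and then to dispatch each group with a short list of rationality techniques. Two general reductions come first. If $G\le S_n$ is not transitive, I would decompose $\{1,\ldots,n\}$ into $G$-orbits $O_1,\ldots,O_r$, so that $G$ embeds into $\prod_j\operatorname{Sym}(O_j)$, and peel off the orbits one at a time, using the no-name lemma so that the non-faithful summands are absorbed as pure transcendentals; this reduces part (1) to the transitive subgroups of $S_5$. For a transitive $G$ I would then split the permutation module $V=\bigoplus_i k x_i$ as $V=V_0\oplus V_1$, where $V_0=k\cdot(x_1+\cdots+x_n)$ is the trivial summand and $V_1$ is the standard (augmentation) representation; the no-name lemma again gives that $k(V)^G$ is rational over $k(V_1)^G$, so it suffices to treat the faithful $(n-1)$-dimensional module $V_1$. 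The basic building blocks are the theorem on symmetric functions for $G=S_n$, Fischer's theorem together with cyclic and metacyclic lattice methods (Endo--Miyata, Voskresenskii) for the solvable groups, and Galois descent for passing from $k(\zeta)$ back to $k$.

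For part (1) the transitive subgroups of $S_5$ are $C_5$, $D_5$, the Frobenius group $F_{20}=\operatorname{AGL}_1(\bm{F}_5)$, $A_5$ and $S_5$. The symmetric group is immediate, and $C_5,D_5,F_{20}$ are handled by the metacyclic analysis of $V_1$: the normal $C_5$ acts through its nontrivial characters, and the quotient permutes them, so the relevant lattice is controlled and rational. The one serious case is $A_5$, for which I would invoke the rationality of the $A_5$-invariants in five variables (Maeda's theorem over $\bm{Q}$, valid over an arbitrary field), or reprove it by a descent argument from the $S_5$-invariants using the Vandermonde and the discriminant. This is the main obstacle in part (1).

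For parts (2) and (3) the same scheme applies. The transitive subgroups of $S_7$ are $C_7$, $D_7$, $F_{21}=C_7\rtimes C_3$, $F_{42}=\operatorname{AGL}_1(\bm{F}_7)$, $PSL_2(\bm{F}_7)$, $A_7$ and $S_7$; all the solvable Frobenius groups and $S_7$ fall to the cyclic and metacyclic analysis of $V_1$, while $A_7$ is left open and $PSL_2(\bm{F}_7)$ is settled only under the hypothesis $k\supset\bm{Q}(\sqrt{-7})$, where one exploits the special invariant theory of the $PSL_2(\bm{F}_7)$-action (the geometry of the Klein quartic) together with a twist by $\sqrt{-7}$. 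For part (3) I would use the classical fact that a solvable transitive group of prime degree $11$ is conjugate to a subgroup of $\operatorname{AGL}_1(\bm{F}_{11})=C_{11}\rtimes C_{10}$, hence is one of the metacyclic groups $C_{11}\rtimes C_d$ with $d\mid 10$; these are exactly the Frobenius-type groups to which the lattice methods apply, yielding $k$-rationality over every field.

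The hard part throughout is the non-solvable groups: the lattice and monomial reductions that settle the cyclic and metacyclic cases break down, and one must bring in the special geometry of each simple group. Thus $A_5$ is resolvable (Maeda), $PSL_2(\bm{F}_7)$ succeeds only after adjoining $\sqrt{-7}$, and $A_7$ remains genuinely unresolved; the groups excluded in the statement mark precisely where the general method stops. Full details for all three parts are carried out in \cite{KW}.
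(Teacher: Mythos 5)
You should know at the outset that the paper contains no proof of Theorem \ref{t1.1}: all three parts are quoted verbatim, with explicit citations, from Theorems 1.3, 1.4 and 1.5 of \cite{KW}, and the present article only uses them as input (for instance Maeda's theorem for $A_5$ enters via \cite[Theorem 2.6]{KW} in Step 3 of the proof of Theorem \ref{t3.1}). So there is no internal argument to measure your proposal against. What you have written is a plausible reconstruction of the strategy of \cite{KW}, and you identify the right skeleton: the classification of transitive subgroups of $S_5$ and $S_7$, Maeda's theorem for $A_5$, Galois's theorem that a solvable transitive group of prime degree $p$ sits inside $AGL_1(\bm{F}_p)$, and the special $3$-dimensional geometry of $PSL_2(\bm{F}_7)$ over $\bm{Q}(\sqrt{-7})$.

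As a proof, however, your text is a plan with several soft spots. (i) The reduction to transitive subgroups is not a one-line application of the no-name lemma: when $G$ has orbits $O_1,\ldots,O_r$ it generally acts unfaithfully on each individual orbit, so you cannot simply ``peel off'' orbits; one must pass to invariants under the kernel of the action on part of the variables and verify that generators of the resulting fixed field can be chosen on which the quotient group still acts linearly. This is exactly the content of the first paragraph of the proof of \cite[Theorem 3.4]{KW}, which Section 3 of this paper invokes for the same reduction. (ii) The splitting $V=V_0\oplus V_1$ fails when $\operatorname{char}k$ divides $n$ (then $V_0\subset V_1$); the reduction to $k(V_1)^G$ survives because $V_1$ is still a faithful submodule for $n\ge 3$ and the no-name lemma needs only a faithful subspace, but your direct-sum formulation is not characteristic-free, and the theorem is asserted for every field. (iii) ``Cyclic and metacyclic lattice methods'' is where the actual work lies: the introduction of this very paper notes that $k(x_1,\ldots,x_8)^{C_8}$ is \emph{not} always $k$-rational, so rationality for $C_7\subset S_7$ and $C_{11}\subset S_{11}$ over an arbitrary field is not automatic and depends on the Endo--Miyata/Lenstra analysis succeeding for $p=7$ and $p=11$; this must be checked, not merely cited as a method. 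None of these is a wrong turn, but each is a genuine gap between your outline and a complete argument; deferring the details to \cite{KW}, as your last sentence does, is in the end exactly what the paper itself does.
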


What we will prove in this paper is the case $G\subset S_6$.
We will establish the following theorem.

\begin{theorem} \label{t1.2}
Let $k$ be any field, $G$ be any subgroup of $S_6$. Then
$k(x_1,\ldots,x_6)^G$ is $k$-rational, except when $G$ is
isomorphic to the group $A_6$, $PSL_2(\bm{F}_5)$ or
$PGL_2(\bm{F}_5)$.

When $G$ is conjugate to the group $PSL_2(\bm{F}_5)$ or
$PGL_2(\bm{F}_5)$ embedded in $S_6$, then
$\bm{C}(x_1,\ldots,x_6)^G$ is $\bm{C}$-rational and
$k(x_1,\ldots,x_6)^G$ is stably $k$-rational for any field $k$.
\end{theorem}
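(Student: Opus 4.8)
The statement really contains two assertions of different character---stable $k$-rationality for every field $k$, and honest $\bm{C}$-rationality---so I would treat them separately, obtaining the first almost for free from the quoted results and reserving the genuine effort for the second.

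For the stable part the plan is to play off the two distinct permutation embeddings of the underlying abstract group. Write $L=k(x_1,\dots,x_6)^{G}$. Since $G\cong PSL_2(\bm{F}_5)$ (resp.\ $PGL_2(\bm{F}_5)$) is abstractly $A_5$ (resp.\ $S_5$), the same group also sits in $S_5$ through its ordinary action on five letters, and there it is transitive. Applying \cite[Theorem 2.1]{KW} to that degree-$5$ embedding, $k(G)$ is rational over $k(x_1,\dots,x_5)^{G}$, while $k(x_1,\dots,x_5)^{G}$ is $k$-rational by Theorem \ref{t1.1}(1) (here $n=5$). Hence $k(G)$ is $k$-rational for every field $k$. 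Now apply \cite[Theorem 2.1]{KW} to the transitive degree-$6$ embedding $G\hookrightarrow S_6$ arising from the action of $PSL_2(\bm{F}_5)$ (resp.\ $PGL_2(\bm{F}_5)$) on the six points of $\bm{P}^1(\bm{F}_5)$: this gives $k(G)$ rational over $L$, say $k(G)=L(y_1,\dots,y_m)$ with $m=|G|-6>0$. Because $k(G)$ is $k$-rational, the field $L(y_1,\dots,y_m)$ is $k$-rational, which is precisely the assertion that $L$ is stably $k$-rational. This disposes of the stable statement uniformly in $k$.

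The honest $\bm{C}$-rationality does not follow from this, since the displayed isomorphism only exhibits $L$ as a stable factor of the rational field $\bm{C}(G)$, and cancelling the $m=|G|-6$ surplus variables is impossible in general. Over $\bm{C}$ I would therefore argue directly on the representation. As $\operatorname{char}\bm{C}=0$, the permutation module splits as $V=\bm{1}\oplus V_0$ with $V_0=\{(a_i):\sum_i a_i=0\}$ the $5$-dimensional standard summand; since $\sigma_1=x_1+\cdots+x_6$ is invariant and transcendental over $\bm{C}(V_0)^{G}$, one has $\bm{C}(x_1,\dots,x_6)^{G}=\bm{C}(V_0)^{G}(\sigma_1)$, so it suffices to prove that $\bm{C}(V_0)^{G}$ is $\bm{C}$-rational.

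To handle $\bm{C}(V_0)^{G}$ I would exploit the $PGL_2$-structure. The module $V_0$ is (for $A_5$, the unique) $5$-dimensional irreducible, and it may be realised as the space of binary quartic forms, with $G$ acting through $G\hookrightarrow PGL_2(\bm{C})$; the case $PGL_2(\bm{F}_5)$ is obtained from the $PSL_2(\bm{F}_5)$ case by adjoining the outer involution. Using the scaling action of the torus $\bm{G}_m$, which commutes with $G$, one descends to the projectivisation: $\bm{C}(V_0)^{G}$ is purely transcendental of degree one over $\bm{C}(\bm{P}(V_0))^{G}$, the last transcendental being furnished by any homogeneous invariant of degree $1$ in the invariant field (available since the ring of icosahedral invariants contains homogeneous elements of coprime degrees). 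Here $\bm{P}(V_0)=\bm{P}^4$ is the variety of unordered $4$-point configurations on $\bm{P}^1$, and $\bm{C}(\bm{P}(V_0))^{G}$ is the field of $G$-invariant functions of four points on $\bm{P}^1$ under $G\subset PGL_2(\bm{C})$. The plan is then to write down explicit $G$-invariant coordinates on this configuration space from the cross-ratios of the four points and Klein's explicit icosahedral invariants, arranging them as a $G$-equivariant fibration over a rational base (so that L\"uroth applies there) whose generic fibre is visibly rational over the base, and to assemble the rationality from the two pieces.

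The main obstacle is exactly this last step. Everything preceding it is formal, but upgrading the stable-rationality output to genuine $\bm{C}$-rationality forces an explicit rational parametrisation of $\bm{C}(V_0)^{G}$; the delicate point is to choose the fibration and the system of invariants so that base and fibres are simultaneously and visibly rational, uniformly enough to cover both $PSL_2(\bm{F}_5)$ and $PGL_2(\bm{F}_5)$, where the extra outer automorphism alters which classical invariants are at one's disposal.
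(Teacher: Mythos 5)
Your proposal addresses only the second paragraph of the theorem. The first assertion --- that $k(x_1,\ldots,x_6)^G$ is $k$-rational for every subgroup $G$ of $S_6$ not isomorphic to $A_6$, $PSL_2(\bm{F}_5)$ or $PGL_2(\bm{F}_5)$ --- is the bulk of the work and is nowhere touched: one must reduce to transitive subgroups, run through the sixteen conjugacy classes of transitive subgroups of $S_6$, dispose of the wreath products $G_4\simeq S_2\wr_{X_3}S_3$, $G_8$, $G_9$, $G_{12}$ by Theorem \ref{t2.5}, quote \cite{Zh} for $G_9$--$G_{12}$, and give explicit changes of variables (with separate treatment of $\fn{char}k=2$) for $C_6$, $S_3$, $D_6$, $G_5$, $G_6$, $G_7$, using the criteria of \cite{KW} and Hajja's theorem on monomial actions on $k(x,y)$. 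Omitting all of this is a genuine gap, not a stylistic choice.

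On the part you do address: your stable-rationality argument is correct and is a legitimate variant of the paper's. You route through $k(G)$ via the two transitive embeddings $G\subset S_5$ and $G\subset S_6$ together with Lemma \ref{l1.5}, whereas the paper applies the no-name lemma \cite[Theorem 2.1]{KW} twice to the single field $k(x_1,\ldots,x_6,y_1,\ldots,y_5)$; both buy the same conclusion, though the paper's version avoids the detour through the $|G|$-dimensional regular representation. The $\bm{C}$-rationality, however, is not proved: you yourself flag the rational parametrisation of $\bm{C}(V_0)^G$ as ``the main obstacle'' and leave it open, and that step is exactly where the content lies. The paper closes it by quoting Shepherd-Barron's theorem that $\bm{C}(V)^{S_5}$ is $\bm{C}$-rational for every irreducible $V$ (for $G_{13}$), and, for $G_{14}\simeq A_5$, by playing the no-name lemma against the $3$-dimensional faithful irreducible representation so as to reduce to a $2$-dimensional unirational field, where Castelnuovo's theorem applies. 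Note also that your proposed realisation of the $PGL_2(\bm{F}_5)$-module as binary quartics ``by adjoining the outer involution'' cannot work as stated: $S_5$ is not a subgroup of $PGL_2(\bm{C})$, so the relevant $5$-dimensional $S_5$-module (the representation $W'$ of \cite[page 28]{FH}) does not come from an action on configurations of four points on the projective line, and your fibration strategy would at best cover the $PSL_2(\bm{F}_5)$ case.
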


First of all, note that we don't know whether
$k(x_1,\ldots,x_6)^{A_6}$ is $k$-rational or not. A second remark
is that, as abstract groups, $PSL_2(\bm{F}_5)$ (resp.\
$PGL_2(\bm{F}_5)$) is isomorphic to $A_5$ (resp.\ $S_5$). However,
the group $PGL_2(\bm{F}_5)$ embedded in $S_6$ as a transitive
subgroup (see the second paragraph of Section 3) provides a
6-dimensional reducible representation of $S_5$.

Since many transitive subgroups of $S_6$ are of the forms of
wreath products $H\wr G$ where $H\subset S_2$, $G\subset S_3$ or
$H\subset S_3$, $G\subset S_2$ (see Section 3), we embark on a
study of invariant theory of wreath products in Section 2 before
the proof of Theorem \ref{t1.2}. Here is a convenient criterion
for group actions of wreath products.

\begin{theorem} \label{t1.3}
Let $k$ be any field, $G\subset S_m$, $H\subset S_n$. Then the
wreath product $\widetilde{G}:=\allowbreak H\wr G$ can be regarded
as a subgroup of $S_{mn}$. If $k(x_1,\ldots,x_m)^G$ and
$k(y_1,\ldots,y_n)^H$ are $k$-rational, then
$k(z_1,\ldots,z_{mn})^{\widetilde{G}}$ is also $k$-rational.
\end{theorem}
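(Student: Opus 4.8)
The plan is to compute the fixed field in two stages, exploiting the normal subgroup $H^m=\prod_{i=1}^m H_i$ (the base group) of $\widetilde{G}=H\wr G$, with quotient $\widetilde{G}/H^m\cong G$ permuting the $m$ copies. Concretely, I would index the $mn$ variables as $z_{ij}$ with $1\le i\le m$ and $1\le j\le n$, viewing them as $m$ blocks of size $n$: the $i$-th factor $H_i\cong H$ acts only within the $i$-th block by $z_{ij}\mapsto z_{i,h(j)}$, while $g\in G$ permutes the blocks by $z_{ij}\mapsto z_{g(i),j}$. Since $H^m$ is normal in $\widetilde{G}$,
\[
k(z_{ij})^{\widetilde{G}}=\bigl(k(z_{ij})^{H^m}\bigr)^{G},
\]
so it suffices to understand the $H^m$-invariants first and then the residual $G$-action on them.

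First I would handle the base group. By hypothesis $k(y_1,\ldots,y_n)^H=k(w_1,\ldots,w_n)$ with $w_\ell=f_\ell(y_1,\ldots,y_n)$ algebraically independent over $k$. Because $H_i$ acts only on the $i$-th block, I apply the same rational functions blockwise, setting $u_{i\ell}:=f_\ell(z_{i1},\ldots,z_{in})$; each $u_{i\ell}$ is then $H^m$-invariant. Peeling the blocks off one at a time and enlarging the ground field, one checks $k(z_{ij})^{H^m}=k(u_{i\ell}:1\le i\le m,\ 1\le\ell\le n)$. The key point is that the identity $k(y)^H=k(w)$ persists after extending the base field: for any $K\supseteq k$ the $w_\ell$ remain $H$-invariant and algebraically independent, $K(w)\subseteq K(y)^H$, and comparing degrees $|H|=[K(y):K(y)^H]\le[K(y):K(w)]\le[k(y):k(w)]=|H|$ forces equality. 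As the $mn$ elements $u_{i\ell}$ are algebraically independent over $k$, the field $k(z_{ij})^{H^m}$ is rational.

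Next I would read off the $G$-action. Since $g\cdot z_{ij}=z_{g(i),j}$ and $u_{i\ell}$ is built from block $i$, we get $g\cdot u_{i\ell}=u_{g(i),\ell}$; this is well defined on $k(u_{i\ell})$ because the elements of the coset of $g$ modulo $H^m$ all act the same way on the $H^m$-invariants. Thus for each fixed $\ell$ the tuple $(u_{1\ell},\ldots,u_{m\ell})$ spans a copy $V_\ell$ of the natural permutation module $V$ of $G\subset S_m$, and $k(z_{ij})^{H^m}=k(V_1\oplus\cdots\oplus V_n)$ with $G$ acting diagonally. Now $V$ is a faithful $k[G]$-module (a nonidentity permutation cannot fix every $u_{i\ell}$), and by hypothesis $k(V)^G=k(x_1,\ldots,x_m)^G$ is $k$-rational.

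The remaining step, which is the heart of the argument, is to invoke the no-name lemma (additive Hilbert 90 / Speiser's theorem): since $G$ acts faithfully on $V$, the extension $k(V)/k(V)^G$ is Galois with group $G$, and the semilinear $G$-action on $V^{\oplus(n-1)}\otimes_k k(V)$ can be simultaneously linearized, showing $k(V^{\oplus n})^G$ is rational over $k(V)^G$ of transcendence degree $m(n-1)$. Combined with the $k$-rationality of $k(V)^G$, this gives that $k(u_{i\ell})^G$, and hence $k(z_{ij})^{\widetilde{G}}$, is $k$-rational; the transcendence degree bookkeeping $m+m(n-1)=mn$ checks out. I expect the main obstacle to be the first stage — verifying cleanly over an arbitrary field $k$ that the blockwise invariants generate $k(z_{ij})^{H^m}$ — together with stating the no-name lemma in the precise semilinear form needed, since that is exactly what guarantees the $n$ diagonal copies of $V$ can be linearized over $k(V)^G$.
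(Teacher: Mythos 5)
Your proposal follows essentially the same route as the paper's proof (Theorem 2.5): compute the base-group invariants blockwise by applying the given generators of $k(y_1,\ldots,y_n)^H$ to each block, observe that $G$ permutes the resulting $mn$ algebraically independent invariants as $n$ copies of its natural permutation module, and use the no-name lemma (the paper's citation of \cite[Theorem 2.1]{KW}) to strip off all but one copy, reducing to the $k$-rationality of $k(x_1,\ldots,x_m)^G$. Your extra care with the base-field extension in the first stage is a detail the paper passes over more quickly, but the argument is the same.
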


An application of the above theorem is the following theorem; note
that our proof is different from the original proof of Tsunogai
\cite{Ts}.

\begin{theorem}[Tsunogai \cite{Ts}] \label{t1.4}
Let $k$ be any field, $p$ be a prime number.
For any integer $n\ge 2$,
let $P$ be a $p$-Sylow subgroup of $S_n$.
If $k(C_p)$ is $k$-rational,
then $k(x_1,\ldots,x_n)^P$ is also $k$-rational.
\end{theorem}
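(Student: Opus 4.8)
The plan is to turn the recursive structure of Sylow subgroups into a two-layer induction driven entirely by Theorem~\ref{t1.3}. Recall Kaloujnine's description of a $p$-Sylow subgroup $P$ of $S_n$: writing $n=\sum_{i\ge 0}a_ip^i$ in base $p$ (with $0\le a_i\le p-1$), the natural action of $P$ on $\{1,\dots,n\}$ breaks $\{1,\dots,n\}$ into $a_i$ orbits of length $p^i$ for each $i$, and $P$ is the direct product, over these orbits, of the corresponding transitive Sylow subgroups. Let $W_r$ denote a $p$-Sylow subgroup of $S_{p^r}$ in its transitive action on $p^r$ letters, and set $W_0=\{e\}\subset S_1$. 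The structural fact I would use is the recursion
\[
W_r\cong W_{r-1}\wr C_p\qquad(r\ge 1),
\]
where $C_p\subset S_p$ sits on top and $W_{r-1}\subset S_{p^{r-1}}$ is the base; indeed, grouping the $p^r$ letters into $p$ blocks of $p^{r-1}$ letters exhibits $W_{r-1}^{\,p}$ acting within the blocks and $C_p$ cyclically permuting them, and the orders match since $|W_{r-1}|^p\cdot p=p^{(p^r-1)/(p-1)}$.

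First I would prove the transitive case, that $k(x_1,\dots,x_{p^r})^{W_r}$ is $k$-rational, by induction on $r$. For $r=0$ the group is trivial and $k(x_1)$ is obviously $k$-rational. For the inductive step I would apply Theorem~\ref{t1.3} to $\widetilde G=W_{r-1}\wr C_p$ with base $H=W_{r-1}$ and top $G=C_p$: its hypotheses are exactly that $k(y_1,\dots,y_{p^{r-1}})^{W_{r-1}}$ is $k$-rational, which holds by the induction hypothesis, and that $k(x_1,\dots,x_p)^{C_p}$ is $k$-rational. For the latter, note that $C_p$ acting on $p$ letters by a single $p$-cycle is precisely the regular representation of $C_p$, so $k(x_1,\dots,x_p)^{C_p}=k(C_p)$, which is $k$-rational by the hypothesis of the theorem. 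Theorem~\ref{t1.3} then yields $k$-rationality of $k(z_1,\dots,z_{p^r})^{W_r}$, closing the induction; in particular the first step $W_1=W_0\wr C_p=C_p$ recovers the base field $k(C_p)$.

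It remains to descend from the transitive case to arbitrary $n$ using the direct-product decomposition of $P$. For this I would invoke the standard disjoint-variable principle: if groups $G_1,G_2$ act on disjoint sets of variables and both $k(x_1,\dots,x_a)^{G_1}$ and $k(y_1,\dots,y_b)^{G_2}$ are $k$-rational, then so is $k(x_1,\dots,x_a,y_1,\dots,y_b)^{G_1\times G_2}$. The verification is routine: one writes the fixed field as $(k(x_1,\dots,x_a,y_1,\dots,y_b)^{G_1})^{G_2}$, adjoins the $G_1$-fixed variables $y_1,\dots,y_b$ freely while replacing $x_1,\dots,x_a$ by a rational generating set of $k(x_1,\dots,x_a)^{G_1}$, and then treats $G_2$ symmetrically, since it fixes that generating set. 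Applying this repeatedly over the orbits of $P$, each orbit of length $p^i$ with $i\ge 1$ contributing a rational factor by the previous paragraph and each of the $a_0$ fixed points contributing a free variable, gives $k$-rationality of $k(x_1,\dots,x_n)^{P}$.

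I expect no genuine obstacle here: the entire analytic content is packaged in Theorem~\ref{t1.3}, and what remains is the combinatorial bookkeeping of the Sylow structure. The only points requiring a moment's care are the identification of the top $C_p$-action on $p$ letters with the regular representation, so that the hypothesis on $k(C_p)$ is exactly what enters, and the direct-product reduction; neither is difficult. This is precisely why the wreath-product criterion affords a proof shorter than Tsunogai's original.
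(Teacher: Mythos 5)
Your proof is correct and follows essentially the same route as the paper: induction on the transitive case $n=p^{r}$ driven by Theorem \ref{t1.3}, the identification of $k(x_1,\ldots,x_p)^{C_p}$ with $k(C_p)$, and then the disjoint-variable (direct product) reduction for general $n$ via the base-$p$ expansion. The only, immaterial, difference is that you nest the recursion as $W_r\cong W_{r-1}\wr C_p$ (with $C_p$ on top permuting $p$ blocks of size $p^{r-1}$), whereas the paper's Example \ref{ex2.4} uses $C_p\wr_{X_{p^{r-1}}}P_{r-1}$ (with $P_{r-1}$ on top permuting $p^{r-1}$ blocks of size $p$); both yield a Sylow $p$-subgroup of $S_{p^r}$ and Theorem \ref{t1.3} applies equally to either.
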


The following lemma helps to clarify the relationship of
rationality of $k(x_1,\ldots,x_n)^G$ and $k(G)$ when $G$ is a
transitive subgroup of $S_n$.

\begin{lemma} \label{l1.5}
Suppose that $G$ is a transitive subgroup of $S_n$ acting
naturally on the rational function field $k(x_1, \cdots, x_n)$.
Let $G\to GL(V_{\fn{reg}})$ be the regular representation over a
field $k$, and $\{x(g):g\in G\}$ be a dual basis of
$V_{\fn{reg}}$. Then there is a $G$-equivariant embedding
$\Phi:\bigoplus_{1\le i\le n} k\cdot x_i \to \bigoplus_{g\in G}
k\cdot x(g)$.  In particular, $k(G)$ is rational over
$k(x_1,\ldots,x_n)^G$.
\end{lemma}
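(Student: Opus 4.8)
The plan is to realize the permutation module on $\{x_1,\dots,x_n\}$ as an induced module, and then to write down an explicit $G$-equivariant section of the obvious quotient map from the regular module, taking care that the construction involves no averaging so that it remains valid over an arbitrary field $k$ (in particular when $\fn{char}\,k$ divides $|G|$, where Maschke's theorem is unavailable).

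First I would fix the base point $1\in\{1,\dots,n\}$ and set $H=\fn{Stab}_G(1)$. Since $G$ is transitive, the assignment $g\mapsto g(1)$ identifies the left cosets $G/H$ with $\{1,\dots,n\}$, and under $\sigma\cdot x_i=x_{\sigma(i)}$ the module $\bigoplus_i k\,x_i$ is the permutation module $k[G/H]$. On the dual side, a direct computation starting from $h\cdot e_g=e_{hg}$ shows that the dual basis transforms by the same rule, $h\cdot x(g)=x(hg)$, so $\bigoplus_{g} k\,x(g)$ is again the regular module $k[G]$, and in particular $V_{\fn{reg}}$ is self-dual. I would then define
$$\Phi(x_i)=\sum_{\substack{g\in G \\ g(1)=i}} x(g),$$
the sum of the dual basis vectors over the fiber (coset) lying above $i$. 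Equivariance is a one-line reindexing: for $\sigma\in G$, substituting $g'=\sigma g$ turns $\sigma\cdot\Phi(x_i)=\sum_{g(1)=i}x(\sigma g)$ into $\sum_{g'(1)=\sigma(i)}x(g')=\Phi(x_{\sigma(i)})$, as desired.

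The step I regard as the crux—the place where one would otherwise be tempted to invoke Maschke—is injectivity, and here the fiber-sum formula pays off. The fibers $\{g\in G:g(1)=i\}$ for distinct $i$ are disjoint and partition $G$, so the vectors $\Phi(x_1),\dots,\Phi(x_n)$ have pairwise disjoint supports in the basis $\{x(g)\}$ and are therefore automatically linearly independent over $k$. No division is performed anywhere, so $\Phi$ is a $G$-equivariant embedding over every field $k$, which is exactly the flexibility the modular case demands.

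For the final clause, the embedding $\Phi$ linearly embeds the $G$-field $k(x_1,\dots,x_n)$ into the $G$-field $k\!\left(\bigoplus_{g}k\,x(g)\right)$; by the self-duality of the regular representation noted above, the latter fixed field is precisely $k(G)=k(V_{\fn{reg}})^G$. Applying \cite[Theorem 2.1]{KW} to this equivariant linear embedding then yields that $k(G)$ is rational over $k(x_1,\dots,x_n)^G$, completing the argument.
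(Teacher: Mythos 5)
Your proof is correct and follows essentially the same route as the paper: your map $\Phi(x_i)=\sum_{g(1)=i}x(g)$ is exactly the paper's $\Phi(g_iH)=\sum_{h\in H}x(g_ih)$ under the identification of $x_i$ with the coset $g_iH$, and both arguments conclude by applying \cite[Theorem 2.1]{KW}. Your explicit justification of injectivity via the disjoint supports of the fiber sums fills in a step the paper leaves as ``not difficult to show.''
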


\begin{proof}
Note that $k(V_{\fn{reg}})=k(x(g):g\in G)$ with $h \cdot
x(g)=x(hg)$ for any $h,g \in G$.

Define $H=\{g \in G: g(1)=1 \}$. Choose a coset decomposition
$G=\cup_{1 \le i \le n}g_iH$ such that, for any $g \in G$, $g
\cdot g_iH=g_jH$ if and only if $g(i)=j$.

Define a $k$-linear map $\Phi:\bigoplus_{1\le i\le n} k\cdot g_i
H\to \bigoplus_{g\in G} k\cdot x(g)$ by $\Phi(g_i H)=\sum_{h\in H}
x(g_i h)$ $\in \bigoplus_{g\in G} k\cdot x(g)$.

$\Phi$ is a $G$-equivariant map. It is not difficult to show that
$\Phi$ is injective.

Consider the action of $G$ on the field $k(x_1, \cdots, x_n)$.
Identify the cosets $g_iH$ with $x_i$. It follows that, via
$\Phi$, the $G$-field $k(x_1,\ldots,x_n)$ is linearly embedded
into $k(x(g):\allowbreak g\in G)$. By applying Part (1) of
\cite[Theorem 2.1]{KW}, we find that $k(G)$ is rational over
$k(x_1,\ldots,x_n)^G$.
\end{proof}

\begin{remark}
If $G$ is a subgroup of $S_n$ and it is possible to embed
$\bigoplus_{1\le i\le n} k\cdot x_i$ into $\bigoplus_{g\in G}
k\cdot x(g)$, then $G$ is a transitive subgroup.

For, suppose that $T_1, \cdots, T_t$ are the $G$-orbits of the set
$\{x_1, \cdots, x_n \}$ with $t \ge 2$.  Each $T_i$ contributes a
trivial representation of $G$, but the regular representation
contains only one trivial representation. Thus it is impossible
that such a $G$-embedding exists.

\end{remark}

One may consider the rationality problem of $k(x_1,\ldots,x_8)^G$
where $G$ is a transitive subgroup of $S_8$. If $k$ contains
enough roots of unity, e.g. $\zeta_8 \in k$, then it is not very
difficult to show that $k(x_1,\ldots,x_8)^G$ is $k$-rational for
many such groups $G$ by standard methods and previously known
results (except possibly when $G$ is a non-abelian simple group).
However, if $k$ is any field and $g=\langle \sigma \rangle$ where
$\sigma =(1,2, \cdots,8)$, by Endo-Miyata's Theorem, it is known
that $k(x_1,\ldots,x_8)^G$ is $k$-rational if and only if
$k(\zeta_8)$ is cyclic over $k$ or char $k=2$ (\cite[Corollary
3.10; Le]{EM}; see also \cite[Theorem 1.8]{Ka}). The proof of the
above result is non-trivial; similar complicated situations may
happen in other subgroups of $S_8$.

\medskip
We organize this article as follows. A detailed discussion of
wreath products will be given in Section 2. Our method is
applicable not only in the Noether problem (i.e.\ the rational
invariants), but also in the polynomial invariants (see Theorem
\ref{t2.6}). The proof of Theorem \ref{t1.2} will be given in
Section 3.

Standing terminology. Throughout the paper, we denote by $S_n$,
$A_n$, $C_n$, $D_n$ the symmetric group of degree $n$, the
alternating group of degree $n$, the cyclic group of order $n$,
and the dihedral group of order $2n$ respectively. If $k$ is any
field, $k(x_1,\ldots,x_m)$ denotes the rational function field of
$m$ variables over $k$; similarly for $k(y_1,\ldots,y_n)$ and
$k(z_1,\ldots,z_l)$. When $\rho:G\to GL(V)$ is a representation of
$G$ over a field $k$, then $k(V)$ denotes the rational function
field $k(x_1,\ldots,x_n)$ with the induced action of $G$ where
$\{x_1,\ldots,x_n\}$ is a basis of the dual space $V^*$ of $V$. In
particular, when $V=V_{\fn{reg}}$ is the regular representation
space, denote by $\{x(g):g\in G\}$ a dual basis of $V_{\fn{reg}}$;
then $k(V_{\fn{reg}})=k(x(g):g\in G)$ where $h\cdot x(g)=x(hg)$
for any $h,g\in G$. We will write $k(G):=k(V_{\fn{reg}})^G$. When
$G$ is a subgroup of $S_n$, we say that $G$ acts naturally on the
rational function field $k(x_1,\ldots,x_n)$ by $k$-automorphisms
if $\sigma\cdot x_i=x_{\sigma(i)}$ for any $\sigma\in G$, any
$1\le i\le n$.

If $\sigma$ is a $k$-automorphism of the rational function field
$k(x_1,\ldots,x_n)$, it is called a monomial automorphism if
$\sigma (x_j)=b_j(\sigma) \prod_{1 \le i \le n} x_i^{a_{ij}}$
where $(a_{ij})_{1 \le i, j \le n} \in GL_n(\bm{Z})$, and
$b_j(\sigma) \in k^{\times}$. If $b_j(\sigma)=1$, the automorphism
$\sigma$ is called purely monomial. The group action of a finite
group $G$ acting on $k(x_1,\ldots,x_n)$ is called a monomial
action (resp. a purely monomial action) if $\sigma$ acts on
$k(x_1,\ldots,x_n)$ by a monomial (resp. purely monomial)
$k$-automorphism for all $\sigma \in G$.

In discussing wreath products, we denote by $X$ or $Y$ any set
without extra structures unless otherwise specified. The set $X_m$
is a finite set of $m$ elements; thus we write
$X_m=\{1,2,\ldots,m\}$. Similarly we write $Y_n=\{1,2,\ldots,n\}$.

As mentioned before, this article is a continuation of \cite{KW}.
Thus we will cite repeatedly the rationality criteria listed in
Section 2 of \cite{KW}.

\section{Wreath products}

Recall the definition of wreath products $H\wr G$ (or more
precisely $H\wr_X G$) in \cite[page 32, page 313; Is, page 73; DM,
pages 45--50]{Ro}.

\begin{defn} \label{d2.1}
Let $G$ and $H$ be groups and $G$ act on a set $X$ from the left such that $(g_1g_2)\cdot x=g_1\cdot (g_2\cdot x)$,
$1\cdot x=x$ for any $x\in X$, any $g_1,g_2\in G$.
Let $A$ be the set of all functions from $X$ to $H$;
$A$ is a group by defining $\alpha\cdot \beta(x):=\alpha(x)\cdot \beta(x)$ for any $\alpha,\beta\in A$, any $x\in X$.

In case $X$ is a finite set and $|X|=m$,
we will write $X=X_m=\{1,2,\ldots,m\}$ and $A=\prod_{1\le i\le m} H_i$.
Elements in $\prod_{1\le i\le m}H_i$ are of the form $\alpha=(\alpha_1,\ldots,\alpha_m)$ where each $\alpha_i\in H$
and $\alpha=(\alpha_1,\ldots,\alpha_m)$ corresponds to the element $\alpha\in A$ satisfying $\alpha(i)=\alpha_i$ for $1\le i\le m$.

The group $G$ acts on $A$ by
$(\side{g}{\alpha})(x)=\alpha(g^{-1}\cdot x)$ for any $g\in G$,
$\alpha \in A$, $x\in X$. It is easy to verify that
$\side{g_1g_2}{\alpha}=\side{g_1}{(\side{g_2}{\alpha})}$ for any
$g_1,g_2\in G$.

In case $X=X_m$ and $\alpha=(\alpha_1,\ldots,\alpha_m)$, then
$\side{g}{\alpha}=(\alpha_{g^{-1}(1)},\alpha_{g^{-1}(2)},\ldots,\alpha_{g^{-1}(m)})$
where we write $g(i)=g\cdot i$ for any $g\in G$, any $i\in X_m$.

The wreath product $H\wr_X G$ is the semi-direct product $A\rtimes G$ defined by
$(\alpha;g_1)\cdot (\beta;g_2)=(\alpha\cdot \side{g_1}{\beta};g_1g_2)$ for any $\alpha,\beta\in A$, any $g_1,g_2\in G$.

Sometimes we will write $H\wr G$ for $H\wr_X G$ if the set $X$ is understood from the context.
In particular, if $X=G$ and $G$ acts on $X$ by the left regular representation.

Since $G$ and $A$ may be identified as subgroups of $A\rtimes
G=H\wr_X G$, we will identify $g\in G$ and $\alpha \in A$ as
elements in $H\wr_X G$.
\end{defn}

\begin{defn} \label{d2.2}
Let $G$ and $H$ be groups acting on the sets $X$ and $Y$ from the left respectively.
Then the wreath product $H\wr_X G$ acts on the set $Y\times X$ by defining
\[
(\alpha; g)\cdot (y,x)=((\alpha(g(x)))(y),g(x))
\]
for any $x\in X$, $y\in Y$, $g\in G$, $\alpha\in A$.
It is routine to verify that $((\alpha;g_1)\cdot(\beta;g_2))\cdot(y,x)=(\alpha;g_1)\cdot ((\beta;g_2)\cdot (y,x))$ for any $x\in X$,
$y\in Y$, $\alpha,\beta \in A$, $g_1,g_2\in G$.
\end{defn}

In case $G\subset S_m$, $H\subset S_n$,
we may regard $H\wr_{X_m} G$ as a subgroup of $S_{mn}$ because $H\wr_{X_m} G$ acts faithfully on the set
$Y_n\times X_m=\{(j,i):1\le i\le m,1\le j\le n\}$.

If $Y$ is the polynomial ring $k[y_1,\ldots,y_n]$ over a field
$k$, we require the action of $H$ on $Y$ satisfies an extra
condition that, for any $h\in H$, the map $\phi_h:f\mapsto h\cdot
f$ is a $k$-algebra morphism where $f\in k[y_1,\ldots,y_n]$.

\begin{example} \label{ex2.3}
Let $G\subset S_m$, $H\subset S_n$. Then $G$ acts on
$X_m=\{1,2,\ldots,m\}$ and $H$ acts on $Y_n=\{1,2,\ldots,n\}$.
Thus $H\wr_{X_m}G$ acts faithfully on $Y_n\times X_m$ by,
\[
(\alpha_1,\alpha_2,\ldots,\alpha_m;g)\cdot (j,i)=(\alpha_{g(i)}
(j),g(i))
\]
where $\alpha=(\alpha_1,\ldots,\alpha_m)\in A$, $g\in G$.
\end{example}

For $1\le l\le m$, $h\in H$, define $\alpha^{(l)}(h)\in A$ by $(\alpha^{(l)}(h))(l)=h\in H$ and $(\alpha^{(l)}(h))(i)=1\in H$ if $i\ne l$.
It is clear that $H\wr_{X_m} G=\langle \alpha^{(l)}(h), g:1\le l\le m,g\in G,h\in H\rangle$.

In case $G$ is a transitive subgroup of $S_m$,
it is not difficult to verify that $H\wr_{X_m}G=\langle \alpha^{(1)}(h), g:g\in G,h\in H\rangle$.
Note that
\begin{align*}
\alpha^{(1)}(h) &: (j,i)\mapsto \begin{cases} (j,i) & \text{if }i\ne 1, \\ (h(j),1) & \text{if }i=1, \end{cases} \\
g &: (j,i)\mapsto (j,g(i)).
\end{align*}

\begin{example} \label{ex2.4}
Let $p$ be a prime number and $G,H\subset S_p$.
We denote $\lambda=(1,2,\ldots,p)\in S_p$ and identify the set $Y_p\times X_p$ with the set $X_{p^2}$ by the function
\begin{align*}
\varphi:{} & Y_p\times X_p\to X_{p^2} \\
& (j,i)\mapsto i+jp
\end{align*}
where the elements in $X_{p^2}$ are taken modulo $p^2$.

Let $G=\langle \lambda\rangle$, $H=\langle \lambda\rangle$ act
naturally on $X_p$ and $Y_p$ respectively. Then $H\wr_{X_p} G$ is
a group of order $p^{1+p}$ acting on $X_{p^2}$ by identifying
$\sigma=\alpha^{(1)}(\lambda)$ and $\tau=\lambda$ with
\begin{multline*}
(1,1+p,1+2p,\ldots,1+(p-1)p) \text{ and } (1,2,\ldots,p)(p+1,p+2,\ldots,2p)\cdots ((p-1)p+1, \\
(p-1)p+2,\ldots,(p-1)p+p)
\end{multline*}
in $S_{p^2}$.
$H\wr_{X_p} G$ is a $p$-Sylow subgroup of $S_{p^2}$.

Inductively, let $P_r$ be a $p$-Sylow subgroup of $S_{p^r}$
constructed above. Let $H=\langle\lambda\rangle\subset S_p$,
$G=P_r\subset S_{p^r}$. Then $H\wr_{X_{p^r}} G$ is a group of order
$p^{1+p+p^2+\cdots+p^r}$ acting on $X_{p^{r+1}}$ (by the function
$\varphi: Y_p\times X_{p^r} \to X_{p^{r+1}}$ defined by
$\varphi(j,i)=i+j\cdot p^r)$. Thus $H\wr_{X_{p^r}} G$ is a $p$-Sylow
subgroup of $S_{p^{r+1}}$ \cite[page 49]{DM}.

If $n$ is a positive integer and write $n=n_0+n_1p+n_2p^2+\cdots+n_tp^t$ where $0\le n_i\le p-1$ and $n<p^{t+1}$,
then a $p$-Sylow subgroup of $S_n$ is isomorphic to
\[
(P_1)^{n_1} \times \cdots\times (P_t)^{n_t}
\]
where each $P_i$ is isomorphic to a $p$-Sylow subgroup of $S_{p^i}$ for $1\le i\le t$.
\end{example}

We reformulate Theorem \ref{t1.3} as the following theorem.

\begin{theorem} \label{t2.5}
Let $k$ be any field, $G\subset S_m$ and $H\subset S_n$.
Let $G$ and $H$ act on the rational function fields $k(x_1,\ldots,x_m)$ and $k(y_1,\ldots,y_n)$ respectively via $k$-automorphisms defined
by $g\cdot x_i=x_{g(i)}$, $h\cdot y_j=y_{h(j)}$ for any $g\in G$, $h\in H$, $1\le i\le m$, $1\le j\le n$.
Then $\widetilde{G}:= H\wr_{X_m} G$ may be regarded as a subgroup of $S_{mn}$ acting on the rational function field
$k(x_{ij}:1\le i\le m, 1\le j\le n)$ by Definition \ref{d2.2}.
Assume that both $k(x_1,\ldots,x_m)^G$ and $k(y_1,\ldots,y_n)^H$ are $k$-rational.
Then $k(x_{ij}:1\le i\le m,1\le j\le n)^{\widetilde{G}}$ is also $k$-rational.
\end{theorem}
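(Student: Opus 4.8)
The plan is to exploit the normal series $1\triangleleft A\triangleleft\widetilde G$, where $A=\prod_{1\le i\le m}H_i$ is the base group of the semidirect product $\widetilde G=A\rtimes G$ and $\widetilde G/A\cong G$. Arranging the $mn$ variables as an $m\times n$ array $x_{ij}$ (corresponding to the point $(j,i)\in Y_n\times X_m$), the factor $H_i$ permutes only the $i$-th row $\{x_{i1},\ldots,x_{in}\}$ via the action of $H$ on the second index, while a lift of $g\in G$ permutes the rows, $g\cdot x_{ij}=x_{g(i)\,j}$. Since $A$ is normal in $\widetilde G$, we have $k(x_{ij})^{\widetilde G}=\bigl(k(x_{ij})^{A}\bigr)^{G}$, so I would first compute the $A$-invariants and then descend along the residual $G$-action.

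For the first stage, observe that $A$ acts as a direct product: the factor $H_i$ moves only the variables of row $i$, and distinct rows involve disjoint sets of variables. Hence $k(x_{ij}:i,j)^{A}$ is the (free) compositum of the row-wise invariant fields $k(x_{i1},\ldots,x_{in})^{H_i}$. By hypothesis $k(y_1,\ldots,y_n)^{H}=k(f_1,\ldots,f_n)$ for suitable rational functions $f_j=f_j(y_1,\ldots,y_n)$; the crucial point is to use the \emph{same} functions for every row, setting $u_{ij}:=f_j(x_{i1},\ldots,x_{in})$. Then $k(x_{i1},\ldots,x_{in})^{H_i}=k(u_{i1},\ldots,u_{in})$ for each $i$, so that $k(x_{ij})^{A}=k(u_{ij}:1\le i\le m,\,1\le j\le n)$ is purely transcendental of degree $mn$. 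Because the generators were chosen uniformly across rows, the induced $G$-action is again an honest permutation, $g\cdot u_{ij}=u_{g(i)\,j}$.

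For the second stage, note that the $k$-span $\bigoplus_{i,j}k\cdot u_{ij}$ is, as a $G$-module, isomorphic to $V^{\oplus n}$, where $V=\bigoplus_{1\le i\le m}k\cdot x_i$ is the natural permutation module of $G\subset S_m$ and the $n$ copies are indexed by $j$, with $G$ acting diagonally. Since $G\subset S_m$ acts faithfully on $\{1,\ldots,m\}$, the module $V$ is \emph{faithful}. I would then apply the no-name lemma (one of the rationality criteria assembled in Section 2 of \cite{KW}) to the decomposition $V^{\oplus n}=V\oplus V^{\oplus(n-1)}$: as $V$ is faithful, $k(V^{\oplus n})^{G}$ is rational over $k(V)^{G}$ of transcendence degree $m(n-1)$. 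By hypothesis $k(V)^{G}=k(x_1,\ldots,x_m)^{G}$ is $k$-rational, whence $k(x_{ij})^{\widetilde G}=k(u_{ij})^{G}$ is $k$-rational, as required.

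The main obstacle, and the step deserving the most care, is the uniform choice of generators in the first stage. It is essential that every row carries an identical copy of the pair $(H,\,k(y_1,\ldots,y_n))$, since this is precisely what lets a single tuple $f_1,\ldots,f_n$ generate each row's invariant field simultaneously and thereby keep the residual $G$-action a plain permutation of the $u_{ij}$. A careless, row-dependent choice of generators would in general convert the $G$-action into a monomial or otherwise twisted action, destroying the clean identification with $V^{\oplus n}$ and blocking the appeal to the no-name lemma. The remaining ingredients—normality of $A$ in $\widetilde G$ with quotient $G$, and the fact that invariants of a direct-product action on disjoint variable sets form the free compositum of the factor invariants—are routine.
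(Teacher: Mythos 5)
Your proposal is correct and follows essentially the same route as the paper: first compute the $A$-invariants row by row using one fixed generating tuple $F_1,\ldots,F_n$ of $k(y_1,\ldots,y_n)^H$ so that $G$ still permutes the new generators $E_{ij}=F_j(x_{i1},\ldots,x_{in})$ by $g\cdot E_{ij}=E_{g(i),j}$, then peel off all but one copy of the natural permutation module via the no-name lemma (the paper's citation of \cite[Theorem 2.1]{KW}) and invoke the $k$-rationality of $k(x_1,\ldots,x_m)^G$. The point you flag as delicate --- the uniform choice of generators across rows --- is exactly the point the paper's proof relies on as well.
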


\begin{proof}
Adopt the notations in Example \ref{ex2.3}.
For any $1\le l\le m$, any $h\in H$,
define $\alpha^{(l)}(h)\in \widetilde{G}=H\wr_{X_m} G$.
Note that $A=\langle \alpha^{(l)} (h):1\le l\le m,h\in H\rangle$.
Then we find that, for any $g\in G$, any $\alpha^{(l)}(h)$, the actions are given by
\begin{align*}
g &: x_{ij} \mapsto x_{g(i),j} \\
\alpha^{(l)} (h) &: x_{ij} \mapsto \begin{cases} x_{ij} & \text{if }i\ne l, \\ x_{l,h(j)} & \text{if }i=l, \end{cases}
\end{align*}
where $1\le i\le m$, $1\le j\le n$.

Since $k(y_1,\ldots,y_n)^H$ is $k$-rational,
we may find $F_1(y),\ldots,F_n(y)\in k(y_1,\ldots,y_n)$ such that $F_j(y)=F_j(y_1,\ldots,y_n)\in k(y_1,\ldots,y_n)$ for $1\le j\le n$,
and $k(y_1,\ldots,y_n)^H=k(F_1(y),\ldots,F_n(y))$.
It follows that $k(x_{ij}:1\le j\le n)^{\langle \alpha^{(i)}(h): h\in H\rangle}=k(F_1(x_{i1},\ldots,x_{in}),\allowbreak
F_2(x_{i1},\ldots,x_{in}),\ldots,F_n(x_{i1},\ldots,x_{in}))$.
Hence $k(x_{ij}: 1\le i\le m, 1\le j\le n)^A=k(F_1(x_{i1},\ldots,x_{in}),\ldots,F_n(x_{i1},\ldots,x_{in}):1\le i\le m)$.

Note that $F_1(x_{i1},\ldots,x_{in}),\ldots,F_n(x_{i1},\ldots,x_{in})$ (where $1\le i\le m$) are algebraically independent over $k$
and $g\cdot F_j(x_{i1},\ldots,x_{in})=F_j(x_{g(i),1},\ldots,x_{g(i),n})$ for any $g\in G$.
Denote $E_{ij}=F_j(x_{i1},\ldots,x_{in})$ for $1\le i\le m$, $1\le j\le n$.
We find that $k(E_{ij}:1\le i\le m,1\le j\le n)$ is a rational function field over $k$ with $G$-actions given by
$g\cdot E_{ij}=E_{g(i),j}$ for any $g\in G$.

It follows that $k(x_{ij}:1\le i\le m,1\le j\le n)^{\widetilde{G}}=\{k(x_{ij}:1\le i\le m,1\le j\le n)^A\}^G=
k(E_{ij}:1\le i\le m,1\le j\le n)^G=k(E_{11},E_{21},\ldots,E_{m,1})^G (t_{ij}:2\le i\le m, 1\le j\le n)$ for some $t_{ij}$ satisfying
that $g(t_{ij})=t_{ij}$ for any $g\in G$ by applying \cite[Theorem 2.1]{KW}.

Since $k(x_1,\ldots,x_m)^G$ is $k$-rational,
it follows that $k(E_{11},E_{21},\ldots,E_{m,1})^G$ is also $k$-rational.
Hence the result.
\end{proof}

\begin{proof}[Proof of Theorem \ref{t1.4}] -----------------

\medskip
Let $P$ be a $p$-Sylow subgroup of $S_n$. We will show that, if
$k(C_p)$ is $k$-rational, then $k(x_1,\ldots,x_n)^P$ is also
$k$-rational.

Without loss of generality,
we may assume that $P$ is the $p$-Sylow subgroup constructed in Example \ref{ex2.4}.

\medskip
Step 1. Consider the case $n=p^t$ first. Then the $p$-Sylow
subgroup $P_t$ is of the form $P_t=H\wr_{X_{p^{t-1}}} G$ where
$H=\langle\lambda\rangle \simeq C_p$ (with
$\lambda=(1,2,\ldots,p)\in S_p$), and $G=P_{t-1}\subset
S_{p^{t-1}}$. Note that
$k(y_1,\ldots,y_p)^H=k(y_1,\ldots,y_p)^{\langle\lambda\rangle}\simeq
k(C_p)$. By induction, $k(x_1,\ldots,x_{p^{t-1}})^G$ is also
$k$-rational. Applying Theorem \ref{t2.5}, it follows that
$k(z_1,\ldots,z_{p^t})^{P_t}$ is $k$-rational.

\bigskip
Step 2. Suppose that $G_1\subset S_m$ and $G_2\subset S_n$. Thus
$G_1$ acts on $k(x_1,\ldots,x_m)$ and $G_2$ acts on
$k(y_1,\ldots,y_n)$. If $k(x_1,\ldots,x_m)^{G_1}$ and
$k(y_1,\ldots,y_n)^{G_2}$ are $k$-rational, then
$k(x_1,\ldots,x_m)^{G_1}=k(F_1,\ldots,F_m)$ and
$k(y_1,\ldots,y_n)^{G_2}=k(F_{m+1},\ldots,F_{m+n})$ where
$F_i=F_i(x_1,\ldots,x_m)$ for $1\le i\le m$, and
$F_{m+j}=F_{m+j}(y_1,\ldots,y_n)$ for $1\le j\le n$. It follows
that $k(x_1,\ldots,x_m,$ $y_1,\ldots,y_n)^{G_1\times
G_2}=k(F_1,F_2,\ldots,F_{m+n})$, because $[k(x_1,\ldots,x_m,y_1,$
$\ldots,y_n):k(F_1,\ldots,F_{m+n})] =
[k(x_1,\ldots,x_m,y_1,\ldots,y_n):k(F_1,\ldots,F_m,\allowbreak
y_1,\ldots,y_n)] \cdot [k(F_1,$ $\ldots,F_m,y_1,\ldots,y_n):
k(F_1,\ldots,F_{m+n})]=|G_1|\cdot |G_2|$. Thus
$k(x_1,\ldots,x_m,y_1,\ldots,$ $y_n)^{G_1\times G_2}$ is
$k$-rational

\bigskip
Step 3.
Consider the general case.
As in Example \ref{ex2.4},
write $n=n_0+n_1p+n_2p^2+\cdots+n_tp^t$ where $0\le n_i\le p-1$ and $n<p^{t+1}$.

By Step 1, $k(x_1,\ldots,x_{p^i})^{P_i}$ is $k$-rational for any
$1\le i\le t$. Thus $k(x_1,\ldots,x_{p^i},\allowbreak
x_{p^i+1},\ldots,x_{2p^i},\ldots,x_{n_i\cdot p^i})^{P_i^{n_i}}$ is
also $k$-rational by Step 2.

It follows that $k(x_1,\ldots,x_{n-n_0})^P$ is $k$-rational when $P=(P_1)^{n_1}\times (P_2)^{n_2}\times \cdots \times (P_t)^{n_t}$.
Thus $k(x_1,\ldots,x_n)^P$ is also $k$-rational.
\end{proof}

\begin{remark}
In \cite[Theorem 1.7]{KP}, it was proved that, if $k(G)$ and
$k(H)$ are $k$-rational, then so is $k(H\wr G)$ where the group
$H\wr G$ is actually $H\wr_{X} G$ with $X=G$ and $G$ acting on $X$
by the left regular representation.  We remark that this result
follows from Theorem \ref{t2.5} and Lemma \ref{l1.5} if $G$ and
$H$ are transitive subgroups of $S_m$ and $S_n$ respectively.

Similarly, it was proved that, if $k(G_1)$ and $k(G_2)$ are
$k$-rational, so is $k(G_1\times G_2)$ \cite[Theorem 1.3]{KP}.
This result may be generalized to representations other than the
regular representation as follows.
\end{remark}

\begin{theorem} \label{t2.9}
Let $G_1, G_2$ be finite groups, $G_1 \subset S_m$, $G_2 \subset
S_n$, and $G:=G_1 \times G_2$. Let $G$ act naturally on the
rational function field $k(z_{ij}: 1 \le i \le m, 1 \le j \le n)$
by $g_1 \cdot z_{ij} = z_{g_1(i),j}$, $g_2 \cdot z_{ij} =
z_{i,g_2(j)}$ for any $g_1 \in G_1$, any $g_2 \in G_2$. If both
$k(x_1, \cdots, x_m)^{G_1}$ and $k(y_1, \cdots, y_n)^{G_2}$ are
$k$-rational, then $k(z_{ij}: 1 \le i \le m, 1 \le j \le n)^G$ is
also $k$-rational.
\end{theorem}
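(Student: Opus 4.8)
The plan is to compute the invariants in two stages, $k(z_{ij})^{G}=\bigl(k(z_{ij})^{G_1}\bigr)^{G_2}$, mirroring the two-step structure of the proof of Theorem \ref{t2.5} but adding one device that keeps the second group under control. Viewing the $mn$ variables as an $m\times n$ array, $G_1$ permutes the rows (the first index) diagonally across the $n$ columns, $G_2$ permutes the columns diagonally across the $m$ rows, and the two actions commute.

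First I would take $G_1$-invariants, performing the reduction relative to the row-sums $w_i:=\sum_{1\le j\le n} z_{ij}$. These span a faithful copy of the $G_1$-permutation module on which $G_2$ acts trivially, since $g_1\cdot w_i=w_{g_1(i)}$ while $g_2\cdot w_i=w_i$; in particular $B:=k(w_1,\ldots,w_m)^{G_1}\cong k(x_1,\ldots,x_m)^{G_1}$ is $k$-rational by hypothesis, and $G_2$ fixes $B$ pointwise. To trivialize the columns over $k(w_i)$ I introduce the $G_1$-invariants $\theta^{(j)}_l:=\sum_{1\le i\le m} w_i^{\,l}\,z_{ij}$ for $1\le j\le n$, $0\le l\le m-1$; for each fixed $j$ the Vandermonde matrix $(w_i^{\,l})$ is invertible over $k(w_i)$, so $k(w_i)(z_{ij}:i)=k(w_i)(\theta^{(j)}_l:l)$. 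Hence $k(z_{ij})=k(w_i)\bigl(\theta^{(j)}_l:1\le j\le n-1,\ 0\le l\le m-1\bigr)$ with the $\theta^{(j)}_l$ fixed by $G_1$, and therefore $k(z_{ij})^{G_1}=B\bigl(\theta^{(j)}_l:1\le j\le n-1,\ 0\le l\le m-1\bigr)$, which is rational over the $k$-rational field $B$.

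Because $G_2$ fixes every $w_i$, a direct computation gives the clean rule $g_2\cdot\theta^{(j)}_l=\theta^{(g_2(j))}_l$; thus over the base $B$ the second stage is exactly the $G_2$-permutation action on the second index of the $\theta^{(j)}_l$, diagonally over $l=0,\ldots,m-1$. This is $m$ copies of the representation underlying $k(y_1,\ldots,y_n)^{G_2}$, so I would finish by reducing the copies through \cite[Theorem 2.1]{KW} and invoking the $k$-rationality of $k(y_1,\ldots,y_n)^{G_2}$, concluding that $\bigl(k(z_{ij})^{G_1}\bigr)^{G_2}$ is rational over $B$ and hence over $k$.

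The one genuine obstacle lies in this last stage: the $\theta^{(j)}_l$ are not free transcendentals carrying the full permutation action, because for each $l$ they satisfy the relation $\sum_{1\le j\le n}\theta^{(j)}_l=\sum_i w_i^{\,l+1}=:p_{l+1}\in B$. Thus $G_2$ in fact acts by the permutation representation restricted to the affine hyperplanes $\{\sum_j\theta^{(j)}_l=p_{l+1}\}$, i.e.\ essentially by the standard (augmentation) representation rather than the full permutation one, and the heart of the proof is to transfer the assumed rationality of $k(y_1,\ldots,y_n)^{G_2}$ to this constrained action over $B$. What makes this tractable, and what I would isolate as the main lemma, is that each constant $p_{l+1}$ lies in the $G_2$-fixed base $B$: one may adjoin the constant $1$ to the spanning set so that the affine action becomes an honest $B$-(semi)linear one, descend the higher copies relative to a single faithful copy, and then specialize the auxiliary coordinate back to $1$. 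This is precisely the type of reduction I expect to extract from \cite[Theorem 2.1]{KW}, and it is the step where the characteristic-free subtleties (the failure to simply split off the trivial summand when $\operatorname{char}k\mid n$) must be confronted.
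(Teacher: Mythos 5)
Your first two stages are correct, and they follow a genuinely different route from the paper's. The paper's proof is a one-shot application of Part (1) of \cite[Theorem 2.1]{KW}: it sends $x_i\mapsto\sum_j z_{ij}$ and $y_j\mapsto\sum_i z_{ij}$ and thereby reduces everything to $k(x_i,y_j:i,j)^{G_1\times G_2}$, which is $k$-rational by the degree count in Step 2 of the proof of Theorem \ref{t1.4}. (Note that this map satisfies $\Phi(\sum_i x_i)=\Phi(\sum_j y_j)$ and hence is not injective, so the one-dimensional defect you single out as ``the one genuine obstacle'' is in fact also the unacknowledged weak point of the paper's own argument; you have located the real issue.) Your Vandermonde device $\theta^{(j)}_l=\sum_i w_i^l z_{ij}$ is a clean, characteristic-free way to exhibit $k(z_{ij})$ as a purely transcendental extension of $k(w_1,\ldots,w_m)$ by $G_1$-invariant generators, and the computations $k(z_{ij})^{G_1}=B(\theta^{(j)}_l:1\le j\le n-1,\,0\le l\le m-1)$ and $g_2\cdot\theta^{(j)}_l=\theta^{(g_2(j))}_l$ are right.

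The gap is in your final step, and it is a real one: ``specialize the auxiliary coordinate back to $1$'' is not a legitimate move, since specializing a variable to a constant in an invariant field does not in general preserve rationality, and nothing in \cite[Theorem 2.1]{KW} authorizes it; the translations in your affine action are by the specific nonzero elements $p_{l+1}\in B$, not by a free parameter that can be switched on and off. What closes the argument is instead a spare fixed transcendental, which your setup provides. Concretely, $G_2$ acts faithfully on $L_0:=B(\theta^{(1)}_0,\ldots,\theta^{(n-1)}_0)$ (a permutation fixing $\theta^{(j)}_0$ for all $j\le n-1$ fixes $n-1$ letters, hence is trivial), so Part (1) of \cite[Theorem 2.1]{KW} gives $B(\theta^{(j)}_l:j,l)^{G_2}=L_0^{G_2}(u_1,\ldots,u_{(m-1)(n-1)})$ with the $u_i$ fixed. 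Dividing by $\rho:=\sum_j\theta^{(j)}_0=\sum_i w_i\in B$ identifies $L_0$ with $B(y_1/s,\ldots,y_{n-1}/s)$, $s=\sum_j y_j$, as $G_2$-fields over $B$, whence $L_0^{G_2}(u_1)\cong B(y_1/s,\ldots,y_{n-1}/s)^{G_2}(s)=B(y_1,\ldots,y_n)^{G_2}$, and the latter is $k$-rational by the degree count of Step 2 of the proof of Theorem \ref{t1.4}. Since the theorem is trivial when $m=1$ or $n=1$, one may assume $(m-1)(n-1)\ge 1$, and the proof is complete. So your plan is completable, but as written the decisive reduction is asserted rather than proved, and the literal mechanism you propose for it would not work.
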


\begin{proof}
Define an action of $G$ on the rational function field $k(x_i,
y_j: 1 \le i \le m, 1 \le j \le n)$ by $g_1 \cdot x_i=x_{g_1(i)}$,
$g_1 \cdot y_j=y_j$, $g_2 \cdot x_i=x_i$, $g_2 \cdot
y_j=y_{g_2(j)}$ for any $g_1 \in G_1$, any $g_2 \in G_2$.

The $k$-linear map $\Phi: (\oplus_{1 \le i \le m} k \cdot x_i)
\oplus (\oplus_{1 \le j \le n} k \cdot y_j) \longrightarrow
\oplus_{1 \le i \le m, 1 \le j \le n} k \cdot z_{ij}$ defined by
$\Phi(x_i)= \sum_{1 \le j \le n} z_{ij}$ and $\Phi(y_j)= \sum_{1
\le i \le m} z_{ij}$ is $G$-equivariant.

By Part (1) of \cite[Theorem 2.1]{KW}, $k(z_{ij}: 1 \le i \le m, 1
\le j \le n)^G$ is rational over $k(x_i, y_j: 1 \le i \le m, 1 \le
j \le n)^G$. It is easy to see that $k(x_i, y_j: 1 \le i \le m, 1
\le j \le n)^G$ is $k$-rational. So is $k(z_{ij}: 1 \le i \le m, 1
\le j \le n)^G$.
\end{proof}

It is easy to adapt the proof of the above theorem to the
following theorem.

\begin{theorem} \label{t2.10}
Let $G_1, G_2$ be finite groups, $G:=G_1 \times G_2$. Suppose that
$\rho_1:G_1 \to GL(V)$, $\rho_2:G_2 \to GL(W)$ are faithful
representations over a field $k$. Let $G$ act on $V\otimes_k W$ by
$g_1 \cdot (v \otimes w) =(g_1 \cdot v) \otimes w$, $g_2 \cdot (v
\otimes w) =v \otimes (g_2 \cdot w)$ for any $g_1 \in G_1$, $g_2
\in G_2$, $v \in V, w \in W$. Assume that (i) $V$ and $W$ contain
a trivial representation, and (ii) $k(V)^{G_1}$ and $k(W)^{G_2}$
are $k$-rational. Then $k(V\otimes_k W)^G$ is also $k$-rational.
\end{theorem}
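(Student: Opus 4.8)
The plan is to imitate the proof of Theorem \ref{t2.9} almost verbatim, replacing the permutation coordinates used there by the invariant linear forms furnished by hypothesis (i). Fix bases $\{e_1,\dots,e_m\}$ of $V$ and $\{f_1,\dots,f_n\}$ of $W$, so that $\{e_i\otimes f_j\}$ is a basis of $V\otimes_k W$; let $z_{ij}$ be the dual coordinates, and note that $G=G_1\times G_2$ acts linearly on $(V\otimes_k W)^\ast=V^\ast\otimes_k W^\ast$ with $G_1$ transforming the first tensor factor via $\rho_1^\ast$ and $G_2$ the second via $\rho_2^\ast$. Alongside this I would introduce the ``split'' action of $G$ on $k(V\oplus W)$, in which $G_1$ acts on the coordinates of $V$ and trivially on those of $W$, while $G_2$ acts on the coordinates of $W$ and trivially on those of $V$.

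First I would dispose of the base field. Because $G_1$ and $G_2$ move disjoint sets of variables, the argument of Step 2 in the proof of Theorem \ref{t1.4} applies: writing $k(V)^{G_1}=k(F_1,\dots,F_a)$ and $k(W)^{G_2}=k(F'_1,\dots,F'_b)$ (possible by hypothesis (ii)), one obtains $k(V\oplus W)^G=k(F_1,\dots,F_a,F'_1,\dots,F'_b)$, which is therefore $k$-rational.

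The heart of the matter is an equivariant embedding into the tensor product, and this is where hypothesis (i) enters. Hypothesis (i) supplies a nonzero $G_1$-invariant form $\xi\in(V^\ast)^{G_1}$ and a nonzero $G_2$-invariant form $\eta\in(W^\ast)^{G_2}$. I would then define a $k$-linear map $\Phi\colon V^\ast\oplus W^\ast\to V^\ast\otimes_k W^\ast=(V\otimes_k W)^\ast$ by $\Phi(v^\ast)=v^\ast\otimes\eta$ and $\Phi(w^\ast)=\xi\otimes w^\ast$. Since $\eta$ is fixed by $G_2$ and untouched by $G_1$ (and dually for $\xi$), $\Phi$ is $G$-equivariant, in exact parallel with the maps $x_i\mapsto\sum_j z_{ij}$, $y_j\mapsto\sum_i z_{ij}$ of Theorem \ref{t2.9} (there $\xi=\sum_i x_i$ and $\eta=\sum_j y_j$). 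Applying Part (1) of \cite[Theorem 2.1]{KW} to $\Phi$ exactly as in Theorem \ref{t2.9} then shows that $k(V\otimes_k W)^G$ is rational over the $k$-rational field $k(V\oplus W)^G$, and hence is itself $k$-rational.

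The step I expect to need the most care is the construction of $\Phi$, i.e.\ the use of hypothesis (i). Two points must be checked. First, (i) must be read as producing \emph{invariant forms} $\xi\in(V^\ast)^{G_1}$, $\eta\in(W^\ast)^{G_2}$ on the coordinate spaces; this is automatic for the self-dual permutation representations underlying Theorem \ref{t2.9}, and is precisely what (i) is there to guarantee. Second, just as in Theorem \ref{t2.9}, the map $\Phi$ is not injective: its kernel is the one-dimensional trivial submodule spanned by $(\xi,-\eta)$, on which $\Phi(\xi,-\eta)=\xi\otimes\eta-\xi\otimes\eta=0$. One must therefore invoke \cite[Theorem 2.1]{KW} in exactly the form used there, in which $G$ acts faithfully on the base $V\oplus W$ (which it does, since $\rho_1$ and $\rho_2$ are faithful and act on complementary summands), rather than appealing to injectivity of $\Phi$. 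Granting that this is the content of the cited result, the remainder of the argument is routine.
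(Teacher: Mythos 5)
Your proposal follows essentially the same route as the paper's proof: there, too, one introduces the split action of $G$ on $V\oplus W$ as in Theorem \ref{t2.9}, uses hypothesis (i) to extract invariant forms $v_0\in (V^{\ast})^{G_1}$ and $w_0\in (W^{\ast})^{G_2}$, defines $\Phi:V^{\ast}\oplus W^{\ast}\to V^{\ast}\otimes_k W^{\ast}$ by $\Phi(x)=x\otimes w_0$, $\Phi(y)=v_0\otimes y$, and concludes via Part (1) of \cite[Theorem 2.1]{KW}. If anything you are more explicit than the paper (whose proof ends with ``the remaining proof is omitted''), notably in recording the $k$-rationality of $k(V\oplus W)^G$ and in flagging that $\Phi$ kills the line spanned by $(v_0,-w_0)$, so the cited rationality criterion must be applied to the faithful base field rather than through injectivity of $\Phi$.
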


\begin{proof}
Define a suitable action of $G$ on $V \oplus W$ as in the proof of
Theorem \ref{t2.9}.

Let $V^{\ast}$ and $W^{\ast}$ be the dual spaces of $V$ and $W$
respectively. Since $V$ contains a trivial representation, it is
possible to find a non-zero element $v_0 \in V^{\ast}$ such that
$g_1 \cdot v_0 =v_0$ for any $g_1 \in G_1$. Similarly, find a
non-zero element $w_0 \in W^{\ast}$ such that $g_2 \cdot w_0 =w_0$
for any $g_2 \in G_2$.

Define the embedding $\Phi: V^{\ast} \oplus
W^{\ast}\longrightarrow V^{\ast} \otimes_k W^{\ast}$ defined by
$\Phi(x)= x \otimes w_0$ and $\Phi(y)= v_0 \otimes y$. $\Phi$ is
$G$-equivariant. The remaining proof is omitted.

\end{proof}

\medskip
Now let's turn to the polynomial invariants of wreath products.

Suppose that a group $H$ acts on $Y$ which is a finitely generated
commutative algebra over a field $k$. In this case, we require
that, for any $h\in H$, the map $\varphi_h:Y\to Y$, defined by
$\varphi_h(y)=h\cdot y$ for any $y\in Y$, is a $k$-algebra
morphism. In Theorem \ref{t2.6}, we take $Y=k[y_j:1\le j\le n]$ a
polynomial ring; in that situation we require furthermore that
$\varphi_h(y_j)=\sum_{1\le l\le n} b_{lj} (h) y_l$ where
$(b_{lj}(h))_{1\le l,j\le n} \in GL_n (k)$.

The method presented in Theorem \ref{t2.6} is valid for a more
general setting, e.g.\ $H$ is a reductive group over a field $k$
and $Y$ is a finitely generated commutative $k$-algebra. In order
to highlight the crucial idea of our method, we choose to
formulate the results for some special cases only.

From now on till the end of this section, $G\subset S_m$ and $H$
is a finite group such that $G$ acts on $X_m=\{1,2,\ldots,m\}$ and
$H$ acts on the polynomial ring $k[y_j:1\le j\le n]$ over a field
$k$ and $\varphi_h(y_j)=\sum_{1\le l\le n} b_{lj} (h) y_l$ for any
$h\in H$, any $1\le j\le n$ with $(b_{lj} (h))_{1\le l,j\le n} \in
GL_n(k)$. Define $\widetilde{G}:=H\wr_{X_m} G$ which acts on the
polynomial ring $k[x_{ij}:1\le i\le m, 1\le j\le n]$ defined by

\begin{align*}
g &: x_{ij} \mapsto x_{g(i),j} \\
\alpha^{(l)}(h) &: x_{ij} \mapsto
\begin{cases} x_{ij} &, \text{ if } i\ne l, \\ \sum_{1\le t\le n} b_{tj}(h)x_{lt} &, \text{ if }i=l, \end{cases}
\end{align*}
where $g\in G$, $\alpha^{(l)} (h)\in A$.

The goal is to find the ring of invariants $k[x_{ij}:1\le i\le m,
1\le j\le n]^{\widetilde{G}}:= \{f\in k[x_{ij}]:\lambda(f)=f$ for
any $\lambda \in \widetilde{G}\}$.

\begin{theorem} \label{t2.6}
Let $k$ be a field, $\widetilde{G}:=H\wr_{X_m} G$ act on the
polynomial ring $k[x_{ij}:1\le i\le m,1\le j\le n]$ as above.
Assume that $\gcd\{|G|,\fn{char}k\}=1$. Suppose that
$k[y_1,\ldots,y_n]^H=k[F_1(y),\ldots,F_N(y)]$ where
$F_t(y)=F_t(y_1,\ldots,y_n)\in k[y_1,\ldots,y_n]$ for $1\le t\le
N$ $(N$ is some integer $\ge n)$. Define an action of $G$ on the
polynomial ring $k[X_{it}:1\le i\le m, 1\le t\le N]$ by
$g(X_{it})=X_{g(i),t}$ for any $g\in G$, $1\le i\le m$, $1\le t\le
N$. Define a $k$-algebra morphism
\[
\Phi: k[X_{it}:1\le i\le m,1\le t\le N] \to
k[F_1(x_{i1},\ldots,x_{in}),\ldots,F_N(x_{i1},\ldots,x_{in}):1 \le
i \le m]
\]
by $\Phi(X_{it})=F_t(x_{i1},\ldots,x_{in})$

If $k[X_{it}:1\le i\le m,1\le t\le N]^G=k[H_1(X),\ldots,H_M(X)]$
where $H_s(X)=H_s(X_{11},\ldots,X_{it},\ldots,X_{m,N}) \in
k[X_{it}: 1\le i\le m,1\le t\le N]$ for $1\le s\le M$, then
$k[x_{ij}:1\le i\le m,1\le j\le
n]^{\widetilde{G}}=k[\Phi(H_1(X)),\ldots,\Phi(H_M(X))]$.
\end{theorem}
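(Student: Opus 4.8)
The plan is to factor the $\widetilde{G}$-action through the normal base subgroup and then descend to $G$-invariants by averaging, mirroring the strategy of Theorem \ref{t2.5} in the polynomial setting. Write $\widetilde{G}=A\rtimes G$ with base group $A=\prod_{1\le i\le m}H_i$, where the $i$-th factor $H_i=H$ acts only on the $i$-th block of variables $\{x_{i1},\ldots,x_{in}\}$ exactly as $H$ acts on $\{y_1,\ldots,y_n\}$ through the matrices $(b_{tj}(h))$. The first step is to compute $k[x_{ij}]^A$. Since the $m$ blocks involve pairwise disjoint variables and forming invariants of a product group on a tensor product over $k$ splits factorwise, I would use the standard identity
\[
k[x_{ij}]^A=\bigotimes_{1\le i\le m}k[x_{i1},\ldots,x_{in}]^{H_i}=k[F_t(x_{i1},\ldots,x_{in}):1\le i\le m,\,1\le t\le N],
\]
where the last equality applies the hypothesis $k[y_1,\ldots,y_n]^H=k[F_1(y),\ldots,F_N(y)]$ in each block. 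In particular this ring is exactly the image of $\Phi$, so $\Phi\colon k[X_{it}]\to k[x_{ij}]^A$ is \emph{surjective}.

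Next I would verify that $\Phi$ is $G$-equivariant: since $g\cdot x_{ij}=x_{g(i),j}$, one has $\Phi(g\cdot X_{it})=\Phi(X_{g(i),t})=F_t(x_{g(i),1},\ldots,x_{g(i),n})=g\cdot F_t(x_{i1},\ldots,x_{in})=g\cdot\Phi(X_{it})$. Because $A$ is normal in $\widetilde{G}$, the subgroup $G$ preserves $k[x_{ij}]^A$, and an element of $k[x_{ij}]$ is $\widetilde{G}$-invariant precisely when it is $A$-invariant and $G$-invariant; hence $k[x_{ij}]^{\widetilde{G}}=(k[x_{ij}]^A)^G$. At this point the problem reduces to establishing the single identity $\Phi(k[X_{it}]^G)=(k[x_{ij}]^A)^G$.

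This last identity is the crux, and I expect it to be the main obstacle, since a $G$-equivariant surjection need not restrict to a surjection on invariants in general. This is exactly where the hypothesis $\gcd\{|G|,\fn{char}k\}=1$ is used. I would invoke the Reynolds operator $\rho=\frac{1}{|G|}\sum_{g\in G}g$, which is a $G$-equivariant $k$-linear projection onto the invariants on both the source and the target and commutes with $\Phi$ by equivariance. Given any $f\in(k[x_{ij}]^A)^G$, surjectivity of $\Phi$ furnishes a preimage $u\in k[X_{it}]$, and then $\rho(u)\in k[X_{it}]^G$ satisfies $\Phi(\rho(u))=\rho(\Phi(u))=\rho(f)=f$; thus $\Phi(k[X_{it}]^G)=(k[x_{ij}]^A)^G=k[x_{ij}]^{\widetilde{G}}$.

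Finally, writing $k[X_{it}]^G=k[H_1(X),\ldots,H_M(X)]$ and using that $\Phi$ is a $k$-algebra homomorphism, the image of this subalgebra is the $k$-algebra generated by the images of the generators, namely $k[\Phi(H_1(X)),\ldots,\Phi(H_M(X))]$. Combining with the previous paragraph yields $k[x_{ij}]^{\widetilde{G}}=k[\Phi(H_1(X)),\ldots,\Phi(H_M(X))]$, as asserted. The only genuinely delicate points to write out carefully are the tensor-product splitting of $k[x_{ij}]^A$ and the averaging argument; everything else is formal bookkeeping about the semidirect-product structure.
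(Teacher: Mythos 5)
Your proposal is correct and follows essentially the same route as the paper: first compute the $A$-invariants block by block (the paper phrases your tensor-product splitting as an intersection $\bigcap_l k[x_{ij}]^{H^{(l)}}$, which amounts to the same thing), then observe that $\Phi$ is a $G$-equivariant surjection onto $k[x_{ij}]^A$ and use the Reynolds operator $\frac{1}{|G|}\sum_{g\in G}g$ applied to a preimage to show that $\Phi$ maps $k[X_{it}]^G$ onto $(k[x_{ij}]^A)^G=k[x_{ij}]^{\widetilde{G}}$. No substantive differences.
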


\begin{remark}
Even without the assumption $\gcd\{|G|,\fn{char}k\}=1$, it is
still known that $k[x_{ij}: 1\le i\le m, 1\le j\le
n]^{\widetilde{G}}$ is finitely generated over $k$. With the
assumption $\gcd\{|G|,\fn{char}k\}=1$, the ring of invariants
$k[x_{ij}:1\le i\le m, 1\le j\le n]^{\widetilde{G}}$ can be
computed effectively (see Example \ref{ex2.7}).
\end{remark}

\begin{proof}
Step 1.
For $1\le l\le m$, define $H^{(l)}=\langle\alpha^{(l)} (h):h\in H\rangle$.
Then $A=\langle H^{(l)}:1\le l\le m\rangle$ and $k[x_{ij}:1\le i\le m, 1\le j\le n]^A=\bigcap_{1\le l\le m} k[x_{ij}:1\le i\le m, 1\le j\le n]^{H^{(l)}}$.

On the other hand, from the definition of $F_1(y),\ldots,F_N(y)$,
it is clear that $k[x_{ij}:1\le i\le m,1\le j\le
n]^{H^{(l)}}=k[F_1(x_{l1},\ldots,x_{ln}),\ldots,F_N(x_{l1},ldots,x_{ln})]
[x_{ij}:i\ne l,1\le i\le m,1\le j\le n]$.

It follows that $k[x_{ij}:1\le i\le m,1\le j\le n]^A=k[F_1(x_{i1},\ldots,x_{in}),\ldots,F_N(x_{i1},\ldots,\allowbreak x_{in}):1\le i\le m]$
and $G$ acts on it by
\[
g:F_t(x_{i1},\ldots,x_{in})\mapsto F_t(x_{g(i),1},\ldots,x_{g(i),n})
\]
where $g\in G$, $1\le t\le N$.

\bigskip
Step 2. It is clear that $\Phi$ is an equivariant $G$-map.

We claim that
$k[F_1(x_{i1},\ldots,x_{in}),\ldots,F_N(x_{i1},\ldots,x_{in})
:i\le m]^G=\Phi(k[X_{it}:1$ $\le i\le m, 1\le t\le N]^G)$.

For, if $h\in
k[F_1(x_{i1},\ldots,x_{in}),\ldots,F_N(x_{i1},\ldots,x_{in}):1\le
i\le m]^G$, choose a preimage $\tilde{h}$ of $h$, i.e.\
$\Phi(\tilde{h})=h$. Since $h=(\sum_{g\in G} g(h))/|G|$ because
$g(h)=h$ for any $g\in G$, it follows that
$h=\Phi(\tilde{h})=\Phi(\sum_{g\in G} g(\tilde{h}))/|G|$. Since
$\sum_{g\in G} g(\tilde{h})\in k[X_{it}: 1\le i\le m, 1\le t\le
N]^G$, it follows that $h$ belongs to the image of $k[X_{it}:1\le
i\le m, 1\le t\le N]^G$.
\end{proof}

\begin{example} \label{ex2.7}
Let $G=S_m$ act on the polynomial ring $k[X_{it}:1\le i\le m, 1\le t\le N]$ by $g(X_{it})=X_{g(i),t}$ for any $g\in G$,
any $1\le i\le m$, $1\le t\le N$.

Let $f_1,\ldots,f_m$ be the elementary symmetric functions of $X_1,\ldots,X_m$,
i.e.\ $f_1=\sum_{1\le i\le m} X_i$, $f_2=\sum_{1\le i< j\le m} X_i X_j$, $\ldots$, $f_m=X_1X_2\cdots X_m$.

The polarized polynomials of $f_2$ with respect to the variables $X_{i1}$ and $X_{i2}$ where $1\le i\le m$ are
\[
\sum_{1\le i<j\le m} X_{i1}X_{j1}, \quad \sum_{1\le i,j\le m \atop i\ne j} X_{i1}X_{j2}, \quad
\sum_{1\le i<j\le m} X_{i2}X_{j2}.
\]

Similarly we may define the polarized polynomials of $f_2$ with
respect to the variables $X_{i1},X_{i2},X_{i3},\ldots, X_{im}$
where $1\le i\le N$. See \cite[pages 60--61]{Sm} for details.

Assume that $1/|G|! \in k$.
Then $k[X_{it}:1\le i\le m, 1\le t\le N]^{S_m}$ is generated over $k$ by all the polarized polynomials of $f_1,\ldots,f_m$
(see \cite[page 68, Theorem 3.4.1]{Sm}).

If we assume only that $1/|G|\in k$ and $G\subset GL_m (k)$ is a
finite group, it is still possible to compute $k[X_{it}:1\le i\le
m, 1\le t\le N]^G$ effectively. See \cite{Fl,Fo} for
details.
\end{example}

\begin{example} \label{ex2.8}
Let $\sigma=(1,2)$, $G=\langle \sigma\rangle$ acting on
$X_2=\{1,2\}$. Also let $H=\langle \tau\rangle\simeq C_3$ acting
on $\bm{C}[y_1,y_2,y_3]$ by $\tau: y_1\mapsto y_1$, $y_2\mapsto
\omega y_2$, $y_3\mapsto \omega^2 y_3$ where
$\omega=e^{2\pi\sqrt{-1}/3}$. Define $\widetilde{G}=H\wr_{X_2} G$
and let it act on $\bm{C}[x_{ij}:1\le i\le 2, 1\le j\le 3]$ by
\begin{align*}
\sigma &: x_{ij}\mapsto x_{\sigma(i),j} \\
\tau_1 &: x_{11}\mapsto x_{11},~ x_{12}\mapsto\omega x_{12},~ x_{13}\mapsto \omega^2 x_{13},~ x_{2j}\mapsto x_{2j}, \\
\tau_2 &: x_{21}\mapsto x_{21},~ x_{22}\mapsto\omega x_{22},~ x_{23}\mapsto \omega^2 x_{23},~ x_{1j}\mapsto x_{1j}.
\end{align*}

Then $\widetilde{G}=\langle\sigma,\tau_1,\tau_2\rangle$ and $\bm{C}[x_{ij}:1\le i\le 2, 1\le j\le 3]^{\langle \tau_1,\tau_2\rangle}
=\bm{C}[x_{11},f_1,f_2,f_3,x_{21},\allowbreak f'_1,f'_2,f'_3]$ where $f_1=x_{12}^3$, $f_2=x_{12}x_{13}$, $f_3=x_{13}^3$,
$f'_1=x_{22}^3$, $f'_2=x_{22}x_{23}$, $f'_3=x_{23}^3$.
Moreover, $\sigma: x_{11}\leftrightarrow x_{21}$, $f_1\leftrightarrow f'_1$, $f_2\leftrightarrow f'_2$, $f_3\leftrightarrow f'_3$.

Define $X_{it}$ (where $1\le i\le 2$, $1\le t\le 4$) as in Theorem
\ref{t2.6} with $\sigma:X_{ij}\leftrightarrow X_{\sigma(i),j}$. It
is easy to verify that $\bm{C}[x_{ij}:1\le i\le 2, 1\le j\le
3]^{\widetilde{G}}=\bm{C}[x_{11}+x_{21},f_1+f'_1,f_2+f'_2,\allowbreak
f_3+f'_3,x_{11}x_{21},
f_1f'_1,f_2f'_2,f_3f'_3,x_{11}f'_1+x_{21}f_1,x_{11}f'_2+x_{21}f_2,x_{11}f'_3+x_{21}f_3,f_1f'_2+f'_1f_2,\allowbreak
f_1f'_3+f'_1f_3,f_2f'_3+f'_2f_3]$.
\end{example}

\section{Proof of Theorem \ref{t1.2}}

Let $G$ be a subgroup of $S_6$ acting naturally on
$k(x_1,\ldots,x_6)$. We will study the rationality of
$k(x_1,\ldots,x_6)^G$.

As in the first paragraph in the proof of Theorem 3.4 of \cite{KW},
we may assume that $G$ is a transitive subgroup without loss of generality.
According to \cite[page 60]{DM},
such a group is conjugate to one of the following 16 groups: \par
\begin{align*}
G_1 &=\langle (1,2,3,4,5,6)\rangle \simeq C_6, \\
G_2 &=\langle (1,2)(3,4)(5,6),(1,3,5)(2,6,4)\rangle \simeq S_3, \\
G_3 &=\langle (1,2,3,4,5,6),(1,6)(2,5)(3,4)\rangle \simeq D_6, \\
G_4 &=\langle (1,2,3)(4,5,6),(1,2)(4,5),(1,4)\rangle \simeq S_2 \wr_{X_3} S_3, \\
G_5 &=\langle (1,2,3)(4,5,6),(1,2)(4,5),(1,4)(2,5)\rangle=G_4\cap A_6, \\
G_6 &=\langle (1,2,3)(4,5,6),(1,5,4,2)\rangle \simeq S_4, \\
G_7 &=\langle (1,2,3)(4,5,6),(1,4)(2,5)\rangle =G_6\cap A_6, \\
G_8 &=\langle (1,2,3)(4,5,6),(1,4)\rangle \simeq C_2\wr_{X_3} C_3, \\
G_9 &=\langle (1,2,3),(1,2),(1,4)(2,5)(3,6)\rangle \simeq S_3\wr_{X_2} C_2, \\
G_{10} &=\langle (1,2,3),(1,4,2,5)(3,6),(1,2)(4,5)\rangle=G_9 \cap A_6, \\
G_{11} &=\langle (1,2,3),(1,2)(4,5),(1,4)(2,5)(3,6)\rangle \simeq C_3^2 \rtimes C_2^2, \\
G_{12} &=\langle (1,2,3),(1,4)(2,5)(3,6)\rangle \simeq C_3 \wr_{X_2} C_2, \\
G_{13} &=\langle (0,1,2,3,4),(0,\infty)(1,4)(1,2,4,3)\rangle \simeq PGL_2(\bm{F}_5), \\
G_{14} &=\langle (0,1,2,3,4),(0,\infty)(1,4)\rangle \simeq PSL_2(\bm{F}_5), \\
G_{15} &=A_6, \\
G_{16} &=S_6.
\end{align*}

Be aware that the above descriptions of the groups $G_2$, $G_4$
and $G_{10}$ are different from those in \cite[page 60]{DM},
because the presentation there contains some minor mistakes.

Note that, the rationality of $k(x_1,\ldots,x_6)^{G_{16}}$ is
easy. On the other hand, the rationality of
$k(x_1,\ldots,x_6)^{G_{15}}$ is still an open problem. When
$G=G_9$, $G_{10}$, $G_{11}$ or $G_{12}$, the rationality of
$k(x_1,\ldots,x_6)^G$ was proved in \cite[Section 3]{Zh}.

When $G=G_4$, $G_8$, $G_9$ or $G_{12}$, the group is a wreath
product. We may apply Theorem \ref{t2.5}, because
$k(x_1,x_2,x_3)^{S_3}$, $k(C_2)$, $k(C_3)$ are $k$-rational by
Theorem \ref{t1.1}. For example, consider the case $G=G_4$. Note
that $S_3$ acts transitively on $X_3=\{1,2,3\}$. Define
$\widetilde{G}=S_2\wr_{X_3} S_3$, $G=S_3$,
$H=S_2=\langle\tau\rangle$ acts on $Y_2=\{1,2\}$. In the notation
of Section 2, we have $A=\prod_{1\le i\le 3}H_i$ where each
$H_i=H$. It follows that
$\widetilde{G}=\langle\sigma_1,\sigma_2,\alpha^{(1)}(\tau)\rangle$
where $\sigma_1=(1,2,3)$, $\sigma_2=(1,2)\in G$ by Example
\ref{ex2.3}. It is not difficult to show that $\widetilde{G}\simeq
G_4$.

Thus it remains to study the rationality of $k(x_1,\ldots,x_6)^G$ when $G=G_1$, $G_2$, $G_3$, $G_5$, $G_6$, $G_7$, $G_{13}$ and $G_{14}$.
We study the case $G_{13}$ and $G_{14}$ first.

\begin{theorem} \label{t3.1}
If $G=G_{13}$ or $G_{14}$, then $\bm{C}(x_1,\ldots,x_6)^G$ is
$\bm{C}$-rational, and $k(x_1,\ldots,x_6)^G$ is stably
$k$-rational where $k$ is any field.
\end{theorem}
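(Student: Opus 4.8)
The plan is to handle the stable rationality over an arbitrary field $k$ and the full $\bm{C}$-rationality by completely different means, since the former is robust and formal while the latter needs an explicit parametrization.

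\emph{Stable $k$-rationality.} As abstract groups $G_{14}\cong A_5$ and $G_{13}\cong S_5$, and each of them carries, besides the $6$-point representation $W$ on $\bm{P}^1(\bm{F}_5)$ coming from the embedding into $S_6$, the natural $5$-point permutation representation $V$ coming from $A_5,S_5\subset S_5$. Both $V$ and $W$ are \emph{faithful} linear representations of the same abstract group $G$. I would apply \cite[Theorem 2.1]{KW}, exactly as in the proof of Lemma \ref{l1.5}, to the two equivariant linear inclusions $V\hookrightarrow V\oplus W$ and $W\hookrightarrow V\oplus W$: since $V$ (resp.\ $W$) is faithful, $k(V\oplus W)^G$ is purely transcendental of degree $6$ over $k(V)^G$ (resp.\ of degree $5$ over $k(W)^G$). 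By Theorem \ref{t1.1}(1) the field $k(V)^G=k(t_1,\ldots,t_5)^G$ is $k$-rational, as it involves only $5$ variables; hence $k(V\oplus W)^G$ is $k$-rational, and therefore $k(W)^G=k(x_1,\ldots,x_6)^G$ becomes $k$-rational after adjoining $5$ algebraically independent elements, i.e.\ it is stably $k$-rational. This argument is uniform in $k$ and disposes of both $G_{13}$ and $G_{14}$ at once.

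\emph{$\bm{C}$-rationality.} Over $\bm{C}$ the $6$-dimensional permutation representation splits as $\mathbf{1}\oplus V_5$, with $V_5$ the $5$-dimensional faithful irreducible and the trivial summand spanned by $s=x_1+\cdots+x_6$; thus $\bm{C}(x_1,\ldots,x_6)^G=\bm{C}(V_5)^G(s)$, and it suffices to prove that $\bm{C}(V_5)^G$ is $\bm{C}$-rational. I would diagonalize the $5$-Sylow subgroup $C_5=\langle(0,1,2,3,4)\rangle$ by a discrete Fourier transform on the five affine points, obtaining coordinates $u_1,u_2,u_3,u_4$ (of weights $1,2,3,4$ under $C_5$) together with the $C_5$-fixed coordinate $w$ inside $V_5$. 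In these coordinates the stabilizer of $\infty$, namely the Borel subgroup $D_5$ for $G_{14}$ and the Frobenius group $F_{20}=C_5\rtimes C_4$ for $G_{13}$, acts monomially, because its generator of order $2$ (resp.\ $4$) conjugates $C_5$ to a power of itself and hence permutes the $u_j$ up to scalars while stabilizing the line $\bm{C}\,w$. The invariant field of this solvable monomial action can then be shown $\bm{C}$-rational by the monomial-action criteria recalled in \cite[Section 2]{KW}.

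The main obstacle is to pass from the Borel invariants to the full $G$-invariants. The group $G$ is generated by its Borel subgroup together with the Weyl involution $x\mapsto -1/x$, which on $\bm{P}^1(\bm{F}_5)$ is precisely the given generator $(0,\infty)(1,4)$ of $G_{14}$; in the Fourier coordinates this element is \emph{not} monomial, since $x_\infty\mapsto x_0=\frac15\sum_j u_j$ mixes $w$ with the $u_j$ and with the former $x_\infty$-direction. I would therefore organize the computation as a fibration: first descend to the field of $C_5$-invariants, which is rational because $C_5$ acts diagonally, arranging the generators so that the residual action of $N_G(C_5)/C_5$ and of the Weyl element becomes transparent, and then build $\bm{C}(V_5)^G$ by a sequence of purely transcendental steps on which the remaining involution acts linearly or monomially. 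Making this last involution explicit, and checking that the resulting tower is genuinely purely transcendental, is the crux. Finally, since $G_{14}\cong A_5$ is an index-$2$ subgroup of $G_{13}\cong S_5$, the extension $\bm{C}(V_5)^{G_{14}}/\bm{C}(V_5)^{G_{13}}$ is quadratic, so I expect to settle one case by the above and deduce the other by a degree-$2$ descent.
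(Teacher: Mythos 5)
Your stable $k$-rationality argument is correct and is essentially the paper's own: both use the two linear, faithful, $G$-equivariant inclusions into $V\oplus W$ together with \cite[Theorem 2.1]{KW} to identify $k(V\oplus W)^G$ as purely transcendental over each of $k(V)^G$ and $k(W)^G$, and then use the $k$-rationality of the five-variable invariant field (Theorem \ref{t1.1}(1), i.e.\ Maeda's theorem in the $A_5$ case) to conclude. The only thing you elide is the verification that $G_{13}\cong S_5$ and $G_{14}\cong A_5$ with the transitive degree-$6$ action restricting to the standard degree-$5$ one on the second factor; the paper spends its Step 1 constructing this isomorphism explicitly, but that is a known fact and not a real gap.

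The $\bm{C}$-rationality half, however, is not a proof. You reduce to $\bm{C}(V_5)^G$, diagonalize the $5$-Sylow, and observe that the Borel (resp.\ $F_{20}$) acts monomially in Fourier coordinates --- all fine --- but the entire difficulty of the problem is concentrated in the step you yourself flag as ``the crux'': the Weyl involution is not monomial in these coordinates, and no mechanism is offered for showing that the tower of fibrations you describe is purely transcendental once that involution is adjoined. Declaring that one would ``build $\bm{C}(V_5)^G$ by a sequence of purely transcendental steps'' is precisely the assertion to be proved. Moreover, your proposed fallback --- settling one of $G_{13},G_{14}$ and deducing the other because $\bm{C}(V_5)^{G_{14}}/\bm{C}(V_5)^{G_{13}}$ is quadratic --- is not valid in either direction: rationality of a field does not pass to a degree-$2$ extension, nor to an index-$2$ subfield, in transcendence degree $5$. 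The paper closes this gap by entirely different means: for $G_{13}\cong S_5$ it invokes Shepherd-Barron's theorem \cite{Sh} that $\bm{C}(V)^{S_5}$ is rational for every irreducible complex representation $V$ (applied to the $5$-dimensional summand $V_5$), and for $G_{14}\cong A_5$ it uses the faithful $3$-dimensional complex representation of $A_5$, two applications of the no-name lemma \cite[Theorem 2.1]{KW} to trade $\bm{C}(V_5)^{A_5}$ (up to adjoining indeterminates) for $\bm{C}(z_1/z_3,z_2/z_3)^{A_5}$, and then Castelnuovo's theorem to get rationality of that unirational surface. Without an input of this strength, your outline does not establish $\bm{C}$-rationality for either group.
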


\begin{proof}
We will prove the $G_{13}$ is isomorphic to $S_5$ as abstract
groups. Then it will be shown that the permutation representation
of $G_{13}$ as a subgroup of $S_6$ is equivalent to the direct sum
of the trivial representation and a 5-dimensional irreducible
representation of $S_5$ over $\bm{Q}$. Then we will apply the
results of \cite{Sh}.

\medskip
Step 1. Since $G_{13}=PGL_2(\bm{F}_5)$ is the automorphism group
of the projective line over $\bm{F}_5$, it acts naturally on
$\bm{F}_5\cup \{\infty\}$. For examples, the fractional linear
transformations $x\mapsto x+1$, $x\mapsto 2/x$ and $x\mapsto 4/x$
correspond to the permutations $(0,1,2,3,4)$,
$(0,\infty)(1,4)(1,2,4,3)(=(0,\infty)(1,2)(3,4))$ and
$(0,\infty)(1,4)$ respectively.

We rewrite the points 0, 1, 2, 3, 4, $\infty$ as 1, 2, 3, 4, 5, 6.
Thus $G_{13}$ and $G_{14}$ are defined by $G_{13}=\langle
(1,2,3,4,5),(1,6)(2,3)(4,5)\rangle \subset S_6$,
$G_{14}=\langle(1,2,3,4,5),(1,6)(2,5)\rangle \subset S_6$.

Define a group homomorphism $\rho:S_5\to S_6$ by $\rho:(1,2)\mapsto (1,6)(2,3)(4,5)$,
$(2,3)\mapsto (1,5)(2,6)(3,4)$, $(3,4)\mapsto(1,2)(3,6)(4,5)$, $(4,5)\mapsto (1,5)(2,3)(4,6)$.

Note that the group $S_5$ is defined by generators $\{(i,i+1):1\le
i\le 4\}$ with relations $(i,i+1)^2=1$ (for $1\le i\le 4$),
$((i,i+1)(i+1,i+2))^3=1$ (for $1\le i\le 3$),
$((i,i+1)(j,j+1))^2=1$ if $|j-i|\ge 2$. These relations are
preserved by $\{ \rho((i,i+1)):1\le i\le 4\}$. Hence $\rho$ is a
well-defined group homomorphism.

We will show that $\rho(S_5)=G_{13}$ and $\fn{Ker}(\rho)=\{1\}$,
i.e.\ $S_5\simeq G_{13}$ as abstract groups. By the definition of
$\rho$, it is easy to verify that
$\rho((1,2,3,4,5))=(1,2,3,4,5)\in S_6$. Since $S_5=\langle
(1,2,3,4,5),(1,2)\rangle$ and $\rho((1,2,3,4,5))$, $\rho((12))\in
G_{13}$. It follows that $\rho(S_5)\subset G_{13}$. Since
$A_5\not\subset \fn{Ker}(\rho)$, it follows that $\rho$ is
injective and $\rho(S_5)=G_{13}$ because $|S_5|=120=|G_{13}|$.

It is possible to construct an embedding of $S_5$ as a transitive
subgroup of $S_6$ by other methods; see, for examples,
\cite[Section 4]{Di}.

Since $G_{14}=PSL_2(\bm{F}_5)$ is a subgroup of
$G_{13}=PGL_2(\bm{F}_5)$ of index 2, it follows that the
restriction of $\rho$ to $A_5$ gives an isomorphism of $A_5$ to
$G_{14}$.

\bigskip
Step 2. Let $\rho':S_6\to GL_6(k)$ be the natural representation
of $S_6$ where $k$ is any field. Then $\rho'\circ \rho: S_5\to
GL_6(k)$ provides the permutation representation of $S_5$ when it
is embedded in $S_6$ via $\rho$. It follows that $S_5$ acts on
$k(x_1,\ldots,x_6)$ via $\rho'\circ \rho$.

When $\fn{char}k=0$, by checking the character table, we find that
the representation $\rho'\circ \rho$ decomposes into $\bm{1}\oplus
\rho_0$ where $\bm{1}$ is the trivial representation of $S_5$, and
$\rho_0$ is the 5-dimensional irreducible representation of $S_5$
which is equivalent to the representation $W'$ in \cite[page
28]{FH}.

\bigskip
Step 3. For any field $k$, let $S_5$ act on the rational function
field $k(y_1,\ldots,y_5)$ by $\sigma(y_i)=y_{\sigma(i)}$ for any
$\sigma \in S_5$. Since $G_{13}\simeq S_5$ by Step 1, we may
consider the action of $G_{13}$ (resp.\ $G_{14}$) on
$k(x_1,\ldots,x_6)$ also. Thus $G_{13}$ and $G_{14}$ act on
$k(x_1,\ldots,x_6,y_1,\ldots,y_5)$.

Apply Part (1) of \cite[Theorem 2.1]{KW} to $k(x_1,\ldots,x_6,y_1,\ldots,y_5)^G$ where $G=G_{13}$ or $G_{14}$.
We find that $k(x_1,\ldots,x_6,y_1,\ldots,y_5)^G=k(x_1,\ldots,x_6)^G(t_1,\ldots,t_5)$ where $g(t_i)=t_i$ for all $g\in G$, all $1\le i\le 5$.

On the other hand, apply Part (1) of \cite[Theorem 2.1]{KW} to $k(x_1,\ldots,x_6,y_1,\ldots,y_5)^G$ again with $L=k(y_1,\ldots,y_5)$.
We get $k(x_1,\ldots,x_6,y_1,\ldots,y_5)^G=k(y_1,\ldots,y_5)^G(s_1,\ldots,\allowbreak s_6)$ where $g(s_i)=s_i$ for all $g\in G$,
all $1\le i\le 6$.

Since $k(y_1,\ldots,y_5)^G$ is $k$-rational when $G=G_{13}\simeq S_5$,
and when $G=G_{14}\simeq A_5$ by Maeda's Theorem \cite[Theorem 2.6]{KW},
we find that $k(x_1,\ldots,x_6)^G$ is stably $k$-rational.

\bigskip
Step 4. We will show that $\bm{C}(x_1,\ldots,x_6)^{G_{13}}$ is
$\bm{C}$-rational. Recall a result of Shepherd-Barron \cite{Sh}
that, if $S_5\to GL(V)$ is any irreducible representation  over
$\bm{C}$, then $\bm{C}(V)^G$ is $\bm{C}$-rational.

By Step 2, since the representation $\rho'\circ \rho$ decomposes,
we may write $\bm{C}(x_1,\ldots,x_6)=\bm{C}(t_1,\ldots,t_6)$ where
$g(t_6)=t_6$ for any $g\in G_{13}$, and $G_{13}\simeq S_5$ acts on
$\bigoplus_{1\le i\le 5} \bm{C}\cdot t_i$ irreducibly.

Apply Shepherd-Barron's Theorem \cite{Sh}.
We get $\bm{C}(t_1,\ldots,t_5)^{G_{13}}$ is $\bm{C}$-rational.
Hence $\bm{C}(x_1,\ldots,x_6)^{G_{13}}=\bm{C}(t_1,\ldots,t_6)^{G_{13}}=\bm{C}(t_1,\ldots,t_5)^{G_{13}}(t_6)$ is also $\bm{C}$-rational.

\bigskip
Step 5. We will show that $\bm{C}(x_1,\ldots,x_6)^{G_{14}}$ is
$\bm{C}$-rational.

By Step 1, $G_{14}\simeq A_5$ as abstract groups.

By \cite[page 29]{FH}, $A_5$ has a faithful complex irreducible representation $A_5\to GL(V)$ where $\dim_{\bm{C}} V=3$.
Let $z_1$, $z_2$, $z_3$ be a dual basis of $V$.
Then $\bm{C}(V)^{G_{14}}=\bm{C}(z_1,z_2,z_3)^{G_{14}}=\bm{C}(z_1/z_3,z_2/z_3,z_3)^{G_{14}}$.

Consider $\bm{C}(x_1,\ldots,x_6)^{G_{14}}$. Define $y_0=\sum_{1\le
i\le 6}x_i$, $y_i=x_i-(y_0/6)$. Sine $G_{14}$ permutes
$x_1,\ldots,x_6$, it follows that $G_{14}$ permutes
$y_1,\ldots,y_6$ where $\sum_{1\le i\le 6} y_i=0$. Thus
$\bm{C}(x_1,\ldots,x_6)^{G_{14}}=\bm{C}(y_1,\ldots,y_5)^{G_{14}}(y_0)$.

Since
$\bm{C}(y_1,\ldots,y_5)^{G_{14}}=\bm{C}(y_1/y_5,y_2/y_5,y_3/y_5,y_4/y_5,y_5)^{G_{14}}$
and $g(y_5)=a_g\cdot y_5+b_g$ for some $a_g,b_g\in
\bm{C}(y_1/y_5,\ldots,y_4/y_5)$, we may apply \cite[Theorem
2.2]{KW}. It follows that
$\bm{C}(y_1/y_5,\ldots,y_4/y_5,y_5)^{G_{14}}=\bm{C}(y_1/y_5,\ldots,y_4/y_5)^{G_{14}}(t)$
for some $t$ with $g(t)=t$ for any $g\in G_{14}$. In conclusion,
$\bm{C}(x_1,\ldots,x_6)^{G_{14}}=\bm{C}(y_1/y_5,y_2/y_5,y_3/y_5,y_4/y_5)^{G_{14}}(t,y_0)$.

On the other hand, apply Part (2) of \cite[Theorem 2.1]{KW} to $\bm{C}(y_1/y_5,y_2/y_5,y_3/y_5,\allowbreak y_4/y_5,z_1/z_3,z_2/z_3)^{G_{14}}$.
We find that $\bm{C}(y_1/y_5,\ldots,y_4/y_5,z_1/z_3,z_2/z_3)^{G_{14}}=\bm{C}(y_1/y_5,\ldots,\allowbreak y_4/y_5)^{G_{14}}(t_1,t_2)$ where
$g(t_1)=t_1$, $g(t_2)=t_2$ for any $g\in G$.
Thus $\bm{C}(x_1,\ldots,x_6)^{G_{14}}=\bm{C}(y_1/y_5,y_2/y_5,y_3/y_5,y_4/y_5)^{G_{14}}(t,y_0)\simeq
\bm{C}(y_1/y_5,\ldots,y_4/y_5)^{G_{14}}(t_1,t_2)=\bm{C}(y_1/y_5,\ldots,\allowbreak y_4/y_5,z_1/z_3,z_2/z_3)^{G_{14}}$.

But $G_{14}$ acts faithfully also on $\bm{C}(z_1/z_3,z_2/z_3)$
because $G_{14}\simeq A_5$ is a simple group. Apply Part (2) of
\cite[Theorem 1]{KW} to
$\bm{C}(y_1/y_5,\ldots,y_4/y_5,z_1/z_3,z_2/z_3)^{G_{14}}$ again
with $L=\bm{C}(z_1/z_3,z_2/z_3)^{G_{14}}$. We get
$\bm{C}(y_1/y_5,\ldots,y_4/y_5,z_1/z_3,z_2/z_3)^{G_{14}}=\bm{C}(z_1/z_3,z_2/z_3)^{G_{14}}\allowbreak
(s_1,s_2,s_3,s_4)$ with $g(s_i)=s_i$ for all $g\in G_{14}$, for
all $1\le i\le 4$.

We conclude that $\bm{C}(x_1,\ldots,x_6)^{G_{14}}\simeq \bm{C}(z_1/z_3,z_2/z_3)^{G_{14}}(s_1,s_2,s_3,s_4)$.

By Castelnuovo's Theorem \cite{Za},
$\bm{C}(z_1/z_3,z_2/z_3)^{G_{14}}$ is $\bm{C}$-rational.
Hence $\bm{C}(x_1,\allowbreak \ldots,x_6)^{G_{14}}$ is $\bm{C}$-rational.
\end{proof}

\begin{remark}
In the last paragraph of Step 5, if we use Zariski-Castelnuovo's
Theorem instead of Castelnuovo's original theorem, then we find a
slightly general result as follows. If $k$ is an algebraically
closed field with $\fn{char}k\ne 2, 5$, then
$k(x_1,\ldots,x_6)^{G_{14}}$ is $k$-rational. Note that the
assumption that $\fn{char}k\ne 2, 5$ is added in order to
guarantee the existence of the 3-dimensional irreducible
representation in \cite[page 29]{FH}.
\end{remark}

\begin{proof}[Proof of Theorem \ref{t1.2}] --------------

\medskip
It remains to prove that, for any field $k$, $k(x_1,\ldots,x_6)^G$
is $k$-rational where $G=G_i$ with $1\le i\le 3$ or $5\le i\le 7$.

\medskip
\begin{Case}{1} $G=G_1$ \end{Case}

Since $G_1=\langle (1,2,3,4,5,6)\rangle$, $k(x_1,\ldots,x_6)^{G_1}=k(G_1)$ is $k$-rational by \cite[Theorem 2.8]{KW}.

\bigskip
\begin{Case}{2} $G=G_2=\langle (1,2)(3,4)(5,6),(1,3,5)(2,6,4)\rangle$. \end{Case}

Write $\sigma=(1,3,5)(2,6,4)$, $\tau=(1,2)(3,4)(5,6)$. Then the
actions are given by
\begin{align*}
\sigma &: x_1\mapsto x_3\mapsto x_5\mapsto x_1,~ x_2\mapsto x_6\mapsto x_4\mapsto x_2, \\
\tau &: x_1\leftrightarrow x_2,~ x_3\leftrightarrow x_4,~ x_5\leftrightarrow x_6.
\end{align*}

Define $y_1=x_1/x_2$, $y_2=x_3/x_6$, $y_3=x_5/x_4$. The we get
\begin{align*}
\sigma &: y_1\mapsto y_2\mapsto y_3\mapsto y_1, \\
\tau &: y_1\mapsto 1/y_1,~ y_2\mapsto 1/y_3,~ y_3\mapsto 1/y_2.
\end{align*}

It follows that $k(x_1,\ldots,x_6)^{G_2}=k(y_1,y_2,y_3,x_2,x_4,x_6)^{G_2}$.
Apply Part (1) of \cite[Theorem 1.1]{KW} with $L=k(y_1,y_2,y_3)$.
We find that $k(y_1,y_2,y_3,x_2,x_4,x_6)^{G_2}=k(y_1,\allowbreak y_2,y_3)^{G_2}(t_1,t_2,t_3)$ where $g(t_i)=t_i$ for all $g\in G_2$,
for all $1\le i\le 3$.

Since $G_2$ acts on $k(y_1,y_2,y_3)$ by purely monomial $k$-automorphisms,
we may apply \cite[Theorem 2.5]{KW}.
Hence $k(y_1,y_2,y_3)^{G_2}$ is $k$-rational.

\bigskip
\begin{Case}{3} $G=G_3=\langle (1,2,3,4,5,6),(1,6)(2,5)(3,4)\rangle$. \end{Case}

Write $\sigma=(1,2,3,4,5,6)$, $\tau=(1,6)(2,5)(3,4)$.
Then $\sigma$ and $\tau$ act on $k(x_1,\ldots,x_6)$ by
\begin{align*}
\sigma &: x_1 \mapsto x_2\mapsto x_3\mapsto x_4\mapsto x_5\mapsto x_6\mapsto x_1, \\
\tau &: x_1\leftrightarrow x_6,~ x_2\leftrightarrow x_5,~ x_3\leftrightarrow x_4.
\end{align*}

\medskip
{\it Subcase 3.1} $\fn{char}k\ne 2$.

Define $y_1=x_1-x_4$, $y_2=x_2-x_5$, $y_3=x_3-x_6$, $y_4=x_1+x_4$, $y_5=x_2+x_5$, $y_6=x_3+x_6$.

Then $k(x_1,\ldots,x_6)=k(y_1,\ldots,y_6)$ and
\begin{align*}
\sigma &: y_1\mapsto y_2\mapsto y_3\mapsto -y_1,~ y_4\mapsto y_5\mapsto y_6\mapsto y_4, \\
\tau &: y_1\mapsto -y_3,~ y_2\mapsto -y_2,~ y_3\mapsto -y_1,~ y_4\leftrightarrow y_6,~ y_5\mapsto y_5.
\end{align*}

Apply Part (1) of \cite[Theorem 2.1]{KW}.
We get $k(y_1,\ldots,y_6)^{G_3}=k(y_1,y_2,y_3)^{G_3}(t_1,\allowbreak t_2,t_3)$ where $\sigma(t_i)=\tau(t_i)=t_i$ for $1\le i\le 3$.

Write $k(y_1,y_2,y_3)=k(y_1/y_3,y_2/y_3,y_3)$.
Apply \cite[Theorem 2.2]{KW}.
We get $k(y_1/y_3,\allowbreak y_2/y_3,y_3)^{G_3}=k(y_1/y_3,y_2/y_3)^{G_3}(t)$ where $\sigma(t)=\tau(t)=t$.

Note that $G_3$ acts on $y_1/y_3$ and $y_2/y_3$ by monomial $k$-automorphisms.
By Hajja's Theorem \cite{Ha}, $k(y_1/y_3,y_2/y_3)^{G_3}$ is $k$-rational.

\medskip
{\it Subcase 3.2} $\fn{char}k=2$.

Define $y_1=x_1/(x_1+x_4)$, $y_2=x_2/(x_2+x_5)$, $y_3=x_3/(x_3+x_6)$, $y_4=x_1+x_4$, $y_5=x_2+x_5$, $y_6=x_3+x_6$.
Then $k(x_1,\ldots,x_6)=k(y_1,\ldots,y_6)$ and
\begin{align*}
\sigma &: y_1\mapsto y_2\mapsto y_3\mapsto y_1+1,~ y_4\mapsto y_5\mapsto y_6\mapsto y_4, \\
\tau &: y_1\mapsto y_3+1,~ y_2\mapsto y_2+1,~ y_3\mapsto y_1+1,~ y_4\leftrightarrow y_6,~ y_5\mapsto y_5.
\end{align*}

Apply Part (1) of \cite[Theorem 2.1]{KW}.
We get $k(y_1,\ldots,y_6)^{G_3}=k(y_1,y_2,y_3)^{G_3}(t_1,\allowbreak t_2,t_3)$ where $\sigma(t_i)=\tau(t_i)=t_i$ for $1\le i\le 3$.

Define $z_1=y_1(y_1+1)$, $z_2=y_1+y_2$, $z_3=y_2+y_3$.

Then $k(y_1,y_2,y_3)^{\langle\sigma^3\rangle}=k(z_1,z_2,z_3)$ and
\begin{align*}
\sigma &: z_1\mapsto z_1+z_2^2+z_2,~ z_2\mapsto z_3\mapsto z_2+z_3+1\mapsto z_2, \\
\tau &: z_1\mapsto z_1+z_2^2+z_3^2+z_2+z_3,~ z_2\leftrightarrow z_3.
\end{align*}

Apply \cite[Theorem 2.2]{KW} to $k(z_1,z_2,z_3)^{\langle\sigma,\tau\rangle}$ with $L=k(z_2,z_3)$.
We get $k(z_1,z_2,z_3)^{G_3}\allowbreak =k(z_2,z_3)^{G_3}(t)$ where $\sigma(t)=\tau(t)=t$.

Define $z_4=z_2+z_3+1$. Then $z_2+z_3+z_4=1$ and
$\sigma:z_2\mapsto z_3\mapsto z_4\mapsto z_2$,
$\tau:z_2\leftrightarrow z_3$, $z_4\mapsto z_4$. Thus $\langle
\sigma,\tau\rangle \simeq S_3$ on $k(z_2,z_3,z_4)$ with
$z_2+z_3+z_4=1$.

Define $u=z_2z_3+z_2z_4+z_3z_4=z_2^2+z_2z_3+z_3^2+z_2+z_3$, $v=z_2z_3z_4=z_2^2z_3+z_2z_3^2+z_2z_3$.

Since $k(u,v) \subset k(z_2,z_3)^{G_3}$ and
$[k(z_2,z_3):k(u,v)]\le 6=[k(z_2,z_3):k(z_2,z_3)^{G_3}]$, it
follows that $k(z_2,z_3)^{G_3}=k(u,v)$. Hence
$k(y_1,y_2,y_3)^{G_3}$ is $k$-rational.

\bigskip
\begin{Case}{4} $G=G_5=\langle(1,2,3)(4,5,6),(1,2)(4,5),(1,4)(2,5)\rangle$. \end{Case}

Write $\sigma=(1,2,3)(4,5,6)$, $\tau=(1,2)(4,5)$,
$\lambda_1=(1,4)(2,5)$,
$\lambda_2=\sigma\lambda_1\sigma^{-1}=(2,5)(3,6)$. Note that
$\langle\lambda_1,\lambda_2\rangle\simeq C_2\times C_2$. The
action of $G_5$ is given by
\begin{align*}
\lambda_1 &: x_1\leftrightarrow x_4,~ x_2\leftrightarrow x_5,~ x_3\mapsto x_3,~ x_6\mapsto x_6, \\
\lambda_2 &: x_1\mapsto x_1,~ x_4\mapsto x_4,~ x_2\leftrightarrow x_5,~ x_3\leftrightarrow x_6, \\
\sigma &: x_1\mapsto x_2\mapsto x_3\mapsto x_1,~ x_4\mapsto x_5\mapsto x_6\mapsto x_4, \\
\tau &: x_1\leftrightarrow x_2,~ x_3\mapsto x_3,~ x_4\leftrightarrow x_5,~ x_6\mapsto x_6.
\end{align*}

\medskip
{\it Subcase 4.1} $\fn{char}k\ne 2$.

Define $y_1=x_1-x_4$, $y_2=x_2-x_5$, $y_3=x_3-x_6$, $y_4=x_1+x_4$, $y_5=x_2+x_5$, $y_6=x_3+x_6$.

Then $k(x_1,\ldots,x_6)=k(y_1,\ldots,y_6)$ and
\begin{align*}
\lambda_1 &: y_1\mapsto -y_1,~ y_2\mapsto -y_2,~ y_3\mapsto y_3,~ y_4\mapsto y_4,~ y_5\mapsto y_5,~ y_6\mapsto y_6, \\
\lambda_2 &: y_1\mapsto y_1,~ y_2\mapsto -y_2,~ y_3\mapsto -y_3,~ y_4\mapsto y_4,~ y_5\mapsto y_5,~ y_6\mapsto y_6, \\
\sigma &: y_1\mapsto y_2\mapsto y_3\mapsto y_1,~ y_4\mapsto y_5\mapsto y_6\mapsto y_4, \\
\tau &: y_1\leftrightarrow y_2,~ y_3\mapsto y_3,~ y_4\leftrightarrow y_5,~ y_6\mapsto y_6.
\end{align*}

Apply Part (1) of \cite[Theorem 2.1]{KW}.
We get $k(x_1,\ldots,x_6)^{G_5}=k(y_1,\ldots,y_6)^{G_5}=k(y_1,y_2,y_3)^{G_5} (t_1,t_2,t_3)$ where $g(t_i)=t_i$ for any $g\in G_5$, any $1\le i\le 3$.

Define $z_1=y_2y_3/y_1$, $z_2=y_1y_3/y_2$, $z_3=y_1y_2/y_3$.
It is not difficult to show that $k(y_1,y_2,y_3)^{\langle\lambda_1,\lambda_2\rangle}=k(z_1,z_2,z_3)$ and the actions of $\sigma$ and $\tau$ are given by
\begin{gather}
\begin{split}
\sigma &: z_1\mapsto z_2\mapsto z_3\mapsto z_1, \\
\tau &: z_1\leftrightarrow z_2,~ z_3\mapsto z_3.
\end{split} \label{eq3.1}
\end{gather}

Hence $k(z_1,z_2,z_3)^{\langle\sigma,\tau\rangle}=k(s_1,s_2,s_3)$ is $k$-rational where $s_1$, $s_2$, $s_3$ are the elementary symmetric functions in $z_1$, $z_2$, $z_3$.

\medskip
{\it Subcase 4.2} $\fn{char}k=2$.

Define $y_1=x_1/(x_1+x_4)$, $y_2=x_2/(x_2+x_5)$, $y_3=x_3/(x_3+x_6)$, $y_4=x_1+x_4$, $y_5=x_2+x_5$, $y_6=x_3+x_6$.
Then $k(x_1,\ldots,x_6)=k(y_1,\ldots,y_6)$ and
\begin{align*}
\lambda_1 &: y_1\mapsto y_1+1,~ y_2\mapsto y_2+1,~ y_3\mapsto y_3,~ y_4\mapsto y_4,~ y_5\mapsto y_5,~ y_6\mapsto y_6, \\
\lambda_2 &: y_1\mapsto y_1,~ y_2\mapsto y_2+1,~ y_3\mapsto y_3+1,~ y_4\mapsto y_4,~ y_5\mapsto y_5,~ y_6\mapsto y_6, \\
\sigma &: y_1\mapsto y_2\mapsto y_3\mapsto y_1,~ y_4\mapsto y_5\mapsto y_6\mapsto y_4, \\
\tau &: y_1\leftrightarrow y_2,~ y_3\mapsto y_3,~ y_4\leftrightarrow y_5,~ y_6\mapsto y_6.
\end{align*}

Apply Part (1) of \cite[Theorem 2.1]{KW}.
We get $k(y_1,\ldots,y_6)^{G_5}=k(y_1,y_2,y_3)^{G_5}\allowbreak (t_1,t_2,t_3)$ where $g(t_i)=t_i$ for any $g\in G_5$, any $1\le i\le 3$.

Define $z_1=y_1(y_1+1)$, $z_2=y_2(y_2+1)$, $z_3=y_1+y_2+y_3$.
It is not difficult to verify that $k(y_1,y_2,y_3)^{\langle \lambda_1,\lambda_2\rangle}=k(z_1,z_2,z_3)$ and
\begin{align*}
\sigma &: z_1\mapsto z_2\mapsto z_1+z_2+z_3^2+z_3,~ z_3\mapsto z_3, \\
\tau &: z_1\leftrightarrow z_2,~ z_3\mapsto z_3.
\end{align*}

Define $z_4=z_1+z_2+z_3^2+z_3$.
It follows that $\sigma: z_1\mapsto z_2\mapsto z_4\mapsto z_1$ and $z_1+z_2+z_4\allowbreak =z_3^2+z_3$.
Define $u=z_1z_2+z_1z_4+z_2z_4=z_1^2+z_2^2+z_1z_2+z_1z_3+z_2z_3+z_1z_3^2+z_2z_3^2$,
$v=z_1z_2z_4=z_1^2z_2+z_1z_2^2+z_1z_2z_3+z_1z_2z_3^2$.
It follows that $k(z_1,z_2,z_3)^{\langle\sigma,\tau\rangle}=k(z_3,u,v)$ is $k$-rational.

\bigskip
\begin{Case}{5}
$G=G_6$ or $G_7$, where $G_6=\langle (1,2,3)(4,5,6),(1,5,4,2)\rangle$,
and $G_7=\langle (1,2,3)\allowbreak (4,5,6),(1,4)(2,5)\rangle$.
\end{Case}

Write $\sigma=(1,2,3)(4,5,6)$, $\tau=(1,5,4,2)$,
$\lambda_1=\tau^2=(1,4)(2,5)$,
$\lambda_2=\sigma\lambda_1\sigma^{-1}=(2,5)(3,6)$. Note that
$\langle \lambda_1,\lambda_2\rangle \simeq C_2\times C_2$. Then
$k(x_1,\ldots,x_6)^{G_6}=k(x_1,\ldots,x_6)^{\langle
\lambda_1,\lambda_2,\sigma,\tau\rangle}$,
$k(x_1,\ldots,x_6)^{G_7}=k(x_1,\ldots,x_6)^{\langle\lambda_1,\lambda_2,\sigma\rangle}$
and the actions are given by
\begin{align*}
\sigma &: x_1\mapsto x_2\mapsto x_3\mapsto x_1,~ x_4\mapsto x_5\mapsto x_6\mapsto x_4, \\
\tau &: x_1\mapsto x_5\mapsto x_4\mapsto x_2\mapsto x_1,~ x_3\mapsto x_3,~ x_6\mapsto x_6, \\
\lambda_1 &: x_1\leftrightarrow x_4,~ x_2\leftrightarrow x_5,~ x_3\mapsto x_3,~ x_6\mapsto x_6, \\
\lambda_2 &: x_1\mapsto x_1,~ x_2\leftrightarrow x_5,~ x_3\leftrightarrow x_6,~ x_4\mapsto x_4.
\end{align*}

The proof is similar to the proof of Case 4.

\medskip
{\it Subcase 5.1} $\fn{char}k\ne 2$.

Define $y_1=x_1-x_4$, $y_2=x_2-x_5$, $y_3=x_3-x_6$, $y_4=x_1+x_4$, $y_5=x_2+x_5$, $y_6=x_3+x_6$.
Then we find that
\begin{align*}
\lambda_1 &: y_1\mapsto -y_1,~ y_2\mapsto -y_2,~ y_3\mapsto y_3,~ y_4\mapsto y_4,~ y_5\mapsto y_5,~ y_6\mapsto y_6, \\
\lambda_2 &: y_1\mapsto y_1,~ y_2\mapsto -y_2,~ y_3\mapsto -y_3,~ y_4\mapsto y_4,~ y_5\mapsto y_5,~ y_6\mapsto y_6, \\
\sigma &: y_1\mapsto y_2\mapsto y_3\mapsto y_1,~ y_4\mapsto y_5\mapsto y_6\mapsto y_4, \\
\tau &: y_1\mapsto -y_2,~ y_2\mapsto y_1,~ y_3\mapsto y_3,~ y_4\leftrightarrow y_5,~ y_6\mapsto y_6.
\end{align*}

Apply Part (1) of \cite[Theorem 2.1]{KW}.
It remains to prove that $k(y_1,y_2,y_3)^G$ is $k$-rational where $G=G_6$ or $G_7$.

Define $z_1=y_2y_3/y_1$, $z_2=y_1y_3/y_2$, $z_3=y_1y_2/y_3$.
Then $k(y_1,y_2,y_3)^{\langle\lambda_1,\lambda_2\rangle}=k(z_1,\allowbreak z_2,z_3)$ and
\begin{align*}
\sigma &: z_1\mapsto z_2\mapsto z_3\mapsto z_1, \\
\tau &: z_1\mapsto -z_2,~ z_2\mapsto -z_1,~ z_3\mapsto -z_3.
\end{align*}

It follows that $k(z_1,z_2,z_3)^{\langle\sigma\rangle}=k(C_3)$ is
$k$-rational by \cite[Theorem 2.8]{KW}. Hence
$k(x_1,\ldots,\allowbreak x_6)^{G_7}$ is $k$-rational.

For $k(x_1,\ldots,x_6)^{G_6}$,
note that $k(z_1,z_2,z_3)^{\langle\sigma,\tau\rangle}=k(z_1/z_3,z_2/z_3,z_3)^{\langle\sigma,\tau\rangle}$.
Apply \cite[Theorem 2.2]{KW}.
We have $k(z_1/z_3,z_2/z_3,z_3)^{\langle\sigma,\tau\rangle}=k(z_1/z_3,z_2/z_3)^{\langle\sigma,\tau\rangle}(t)$ where $\sigma(t)=\tau(t)=t$.

On the other hand, in the last part of Subcase 4.1,
we have $k(z_1,z_2,z_3)^{\langle\sigma,\tau\rangle}$ (see Equation \eqref{eq3.1}).
By the same method as above, we have $k(z_1,z_2,z_3)^{\langle \sigma,\tau\rangle}=k(z_1/z_3,\allowbreak z_2/z_3)^{\langle\sigma,\tau\rangle}(s)$ where $\sigma(s)=\tau(s)=s$.

Note that the actions of $\sigma$, $\tau$ on $z_1/z_3$ and $z_2/z_3$ in \eqref{eq3.1} and in the present situation are the same.
Since $k(z_1,z_2,z_3)^{\langle\sigma,\tau\rangle}$ is $k$-rational in Subcase 4.1,
so is $k(z_1,z_2,z_3)^{\langle\sigma,\tau\rangle}$ in the present case.

\medskip
{\it Subcase 5.2} $\fn{char}k=2$.

Define $y_1=x_1/(x_1+x_4)$, $y_2=x_2/(x_2+x_5)$, $y_3=x_3/(x_3+x_6)$, $y_4=x_1+x_4$, $y_5=x_2+x_5$, $y_6=x_3+x_6$.
Then we have
\begin{align*}
\lambda_1 &: y_1\mapsto y_1+1,~ y_2\mapsto y_2+1,~ y_3\mapsto y_3,~ y_4\mapsto y_4,~ y_5\mapsto y_5,~ y_6\mapsto y_6, \\
\lambda_2 &: y_1\mapsto y_1,~ y_2\mapsto y_2+1,~ y_3\mapsto y_3+1,~ y_4\mapsto y_4,~ y_5\mapsto y_5,~ y_6\mapsto y_6, \\
\sigma &: y_1\mapsto y_2\mapsto y_3\mapsto y_1,~ y_4\mapsto y_5\mapsto y_6\mapsto y_4, \\
\tau &: y_1\mapsto y_2+1,~ y_2\mapsto y_1,~ y_3\mapsto y_3,~ y_4\leftrightarrow y_5,~ y_6\mapsto y_6.
\end{align*}

Apply Part (1) of \cite[Theorem 2.1]{KW}.
It remains to prove that $k(y_1,y_2,y_3)^G$ is $k$-rational where $G=G_6$ or $G_7$.

Define $z_1=y_1(y_1+1)$, $z_2=y_2(y_2+1)$, $z_3=y_1+y_2+y_3$. Then
$k(y_1,y_2,y_3)^{\langle\lambda_1,\lambda_2\rangle}=k(z_1,z_2,z_3)$
and
\begin{align*}
\sigma &: z_1\mapsto z_2\mapsto z_1+z_2+z_3^2+z_3,~ z_3\mapsto z_3, \\
\tau &: z_1\leftrightarrow z_2,~ z_3\mapsto z_3+1.
\end{align*}

Define $z_4=z_1+z_3^2+z_3$, $z_5=z_2+z_3^2+z_3$.
Then $k(z_1,z_2,z_3)=k(z_3,z_4,z_5)$ and
\begin{align*}
\sigma &: z_3\mapsto z_3,~ z_4\mapsto z_5\mapsto z_4+z_5, \\
\tau &: z_3\mapsto z_3+1,~ z_4\leftrightarrow z_5.
\end{align*}

Apply \cite[Theorem 2.2]{KW}.
We get $k(z_3,z_4,z_5)=k(z_4,z_5)(t)$ where $\sigma(t)=\tau(t)=t$.
Thus it remains to consider $k(z_4,z_5)^{\langle\sigma\rangle}$ and $k(z_4,z_5)^{\langle\sigma,\tau\rangle}$.

Note that $\langle\sigma,\tau\rangle\simeq S_3$ on $k(z_4,z_5)$.
Let $t_1$, $t_2$, $t_3$ be the elementary symmetric functions of $z_4$, $z_5$ and $z_4+z_5$.
Be aware that $t_1=z_4+z_5+(z_4+z_5)=0$.
It is easy to see that $k(z_4,z_5)^{\langle\sigma,\tau\rangle}=k(t_2,t_3)$ is $k$-rational.
Hence $k(x_1,\ldots,x_6)^{G_6}$ is $k$-rational.

Consider $k(z_4,z_5)^{\langle\sigma\rangle}=k(z_4/z_5,z_5)^{\langle\sigma\rangle}$.
Apply \cite[Theorem 2.2]{KW}.
We get $k(z_4/z_5,\allowbreak z_5)^{\langle\sigma\rangle}=k(z_4/z_5)^{\langle\sigma\rangle}(s)$ where $\sigma(s)=s$.
Since $k(z_4/z_5)^{\langle\sigma\rangle}$ is $k$-rational by L\"uroth's Theorem.
Thus $k(x_1,\ldots,x_6)^{G_7}$ is $k$-rational.
\end{proof}

\renewcommand{\refname}{\centering{References}}

\end{document}